\definecolor{webgreen}{rgb}{0,.5,0}
\definecolor{webbrown}{rgb}{.6,0,0}
\def\divides{\, | \,}
\def\modd#1 #2{#1\ \mbox{\rm (mod}\ #2\mbox{\rm )}}
\DeclareMathOperator{\lcm}{lcm}
\DeclareMathOperator{\ord}{ord}
\author{Daniel M. Kane\thanks{Supported by NSF Award CCF-1553288
(CAREER) and a Sloan Research Fellowship.}\\
Mathematics \\
University of California, San Diego \\
9500 Gilman Drive \#0404\\
La Jolla, CA  92093-0404 \\
USA \\
{\tt dakane@math.ucsd.edu} \\
\and
Carlo Sanna \\
Dipartimento di Matematica ``Giuseppe Peano''\\
Universit{\`a} degli Studi di Torino\\
Via Carlo Alberto 10 \\
10123 Torino \\ 
Italy \\
{\tt carlo.sanna.dev@gmail.com}\\
\and
Jeffrey Shallit\thanks{Supported by NSERC Discovery Grant \#105829/2013.}\\
School of Computer Science\\
University of Waterloo\\
Waterloo, ON  N2L 3G1 \\
Canada\\
{\tt shallit@uwaterloo.ca} 
}
\title{Waring's Theorem for Binary Powers}
\begin{document}

\maketitle

\theoremstyle{plain}
\newtheorem{theorem}{Theorem}
\newtheorem{corollary}[theorem]{Corollary}
\newtheorem{lemma}[theorem]{Lemma}
\newtheorem{proposition}[theorem]{Proposition}

\theoremstyle{definition}
\newtheorem{definition}[theorem]{Definition}
\newtheorem{example}[theorem]{Example}
\newtheorem{conjecture}[theorem]{Conjecture}

\theoremstyle{remark}
\newtheorem{remark}[theorem]{Remark}

\def\Que{{\mathbb{Q}}}
\def\Enn{{\mathbb{N}}}
\def\Zee{{\mathbb{Z}}}
\newcommand{\seqnum}[1]{\href{http://oeis.org/#1}{\underline{#1}}}
\def\mS{{\mathcal{S}}}
\def\mT{{\mathcal{T}}}

\begin{abstract}
A natural number is a \emph{binary $k$'th power} if its binary representation consists of $k$ consecutive identical blocks.
We prove an analogue of Waring's theorem for sums of binary $k$'th powers.
More precisely, we show that for each integer $k \geq 2$, there exists a positive integer $W(k)$ such that every sufficiently large multiple of $E_k := \gcd(2^k - 1, k)$ is the sum of at most $W(k)$ binary $k$'th powers.
(The hypothesis of being a multiple of $E_k$ cannot be omitted, since we show that the $\gcd$ of the binary $k$'th powers is $E_k$.)
Also, we explain how our results can be extended to arbitrary integer bases $b > 2$.
\end{abstract}

\section{Introduction}

Let $\Enn = \{ 0,1,2,\ldots \}$ be the natural numbers and let $S \subseteq \Enn$.   The principal problem of additive number theory is to determine whether every integer $N$ (resp., every sufficiently large integer $N$) can be represented as the sum of some {\it constant\/} number of elements of $S$, not necessarily distinct,
where the constant does not depend on $N$.  For a superb introduction to this topic, see \cite{N1}.

Probably the most famous theorem of additive number theory is Lagrange's theorem from 1770: every natural number is the sum of four squares \cite{L}.   Waring's problem
(see, e.g., \cite{Sm,VW}), first stated by Edward Waring in 1770, is to determine $g(k)$ such that every natural number is the sum of $g(k)$ $k$'th powers.  (A priori, it is not even clear that $g(k) < \infty$, but this was proven by Hilbert in 1909.)  From Lagrange's theorem we know that $g(2) = 4$.   For other results concerning sums of squares, see, e.g., \cite{G,MW}.

If every natural number is the sum of $k$ elements of $S$,
we say that $S$ forms a {\it basis} of order $k$.  If every sufficiently large natural number is the sum of $k$ elements of $S$, we say that $S$ forms an {\it asymptotic basis} of order $k$.

In this paper, we consider a variation on Waring's theorem, where the ordinary notion of integer power is replaced by a related notion inspired from formal language theory.  Our main
result is Theorem~\ref{main} below.  We say that a natural number $N$ is 
a {\it base-$b$ $k$'th power} if its base-$b$ representation consists of $k$ consecutive identical blocks.  For example, 3549 in base $2$ is
	$$ 1101 \, 1101 \, 1101 ,$$
so 3549 is a base-2 (or binary) cube.  Throughout this paper, we  consider only the {\it canonical} base-$b$
expansions (that is, those without leading zeros).  The binary squares
$$ 0,3,10,15,36,45,54,63,136,153,170,187,204,221,238,255,528,561,594,627,\ldots$$
form sequence \seqnum{A020330} in Sloane's {\it On-Line Encyclopedia of Integer Sequences} \cite{Sl}.  The binary cubes 
$$ 0,7,42,63,292,365,438,511,2184,2457,2730,3003,3276,3549,3822,4095,16912,\ldots$$
form sequence \seqnum{A297405}. 

Notice that a number $N>0$ is a base-$b$ $k$'th power if and only if
we can write $N = a \cdot c_k^b (n)$, where 
$$c_k^b (n) := \frac{b^{kn}-1}{b^n - 1} = 1 + b^n + \cdots + b^{(k-1)n}$$
for some $n \geq 1$ such that
$b^{n-1} \leq a < b^n$.  (The latter condition is needed to ensure that the base-$b$ $k$'th power is formed by the concatenation of blocks that begin with a nonzero digit.) Such a number consists of
$k$ consecutive blocks of digits, each of length $n$.
For example, $3549 = 13 \cdot c_3^2 (4)$. 
We define
$$\mS_k^b := \left\{ n \geq 0  \ : \ n \text{ is a base-$b$ $k$'th power} \right\} = \left\{ a \cdot c_k^b (n)  \ : \ n \geq 1, \ b^{n-1} \leq a < b^n \right\}.$$

The set $\mS_k^b$ is an interesting and natural set to study because its counting function is $\Omega(N^{1/k})$, just like the ordinary $k$'th powers.  It has also appeared in a number of recent papers
(e.g., \cite{BLS}).  However, there are two significant differences between the ordinary $k$'th powers and the base-$b$ $k$'th powers.

The first difference is that $1$ is not a base-$b$ $k$'th power for $k > 1$.  Thus, the base-$b$ $k$'th powers cannot, in general, form a basis of finite order, but only an asymptotic basis.

A more significant difference is that 
the gcd of the ordinary $k$'th powers is always equal to $1$, while the gcd of the base-$b$ $k$'th powers may, in some cases, be greater than one.  This is
quantified in Section~\ref{gcd2}.  Thus, it is not reasonable to expect that every sufficiently large natural number can be the sum of a fixed number of
base-$b$ $k$'th powers; only those that are also a multiple of the $\gcd$ can be so represented.   

\section{The greatest common divisor of $\mS_k^b$}
\label{gcd2}

\begin{theorem}
For $k \geq 1$ define
\begin{align*}
A_k &= \gcd( \mS_k^b) ,\\
B_k &= \gcd( c_k^b (1), c_k^b (2), \ldots) , \\
C_k &= \gcd( c_k^b (1), c_k^b (2), \ldots, c_k^b (k)) , \\
D_k &= \gcd( c_k^b (1), c_k^b (k)) , \\
E_k &= \gcd\!\left( \frac{b^k - 1}{b-1},  k\right) .
\end{align*}
Then $A_k = B_k = C_k = D_k = E_k$.
\label{sanna}
\end{theorem}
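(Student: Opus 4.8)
The plan is to establish the cyclic chain of divisibilities
\[
B_k \mid C_k \mid D_k = E_k \mid B_k ,
\]
together with the separate identity $A_k = B_k$; since the chain closes up on $B_k$, all of $B_k, C_k, D_k, E_k$ must coincide, and then $A_k = B_k$ finishes the proof. First I would dispatch the easy links. For $A_k = B_k$, I use the description from the excerpt: every element of $\mS_k^b$ with a fixed block length $n$ has the form $a\,c_k^b(n)$ with $b^{n-1} \le a < b^n$, so
\[
\gcd\{ a\,c_k^b(n) : b^{n-1} \le a < b^n \} = c_k^b(n)\cdot \gcd\{ a : b^{n-1} \le a < b^n \} = c_k^b(n),
\]
because the admissible $a$ either include $a=1$ (when $n=1$) or contain two consecutive integers (when $n \ge 2$), so their gcd is $1$. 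Taking the gcd over all $n \ge 1$ gives $A_k = \gcd_{n \ge 1} c_k^b(n) = B_k$. The links $B_k \mid C_k$ and $C_k \mid D_k$ are immediate, since the gcd of a set divides the gcd of any subset and $\{c_k^b(1), c_k^b(k)\} \subseteq \{c_k^b(1), \ldots, c_k^b(k)\} \subseteq \{c_k^b(n) : n \ge 1\}$.

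Next I would prove $D_k = E_k$. Since $c_k^b(1) = (b^k-1)/(b-1)$ divides $b^k - 1$, we have $b^k \equiv 1 \pmod{c_k^b(1)}$, so that
\[
c_k^b(k) = \sum_{j=0}^{k-1} b^{jk} \equiv \sum_{j=0}^{k-1} 1 = k \pmod{c_k^b(1)} .
\]
Hence $D_k = \gcd(c_k^b(1), c_k^b(k)) = \gcd(c_k^b(1), k) = E_k$, using $\gcd(x,y) = \gcd(x, y \bmod x)$.

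The hard part, closing the cycle, is to show $E_k \mid B_k$, i.e.\ that $E_k$ divides $c_k^b(n) = (b^{kn}-1)/(b^n-1)$ for every $n \ge 1$. I would argue one prime at a time: fix a prime $p \mid E_k$ and show $v_p(c_k^b(n)) \ge v_p(k) \ge v_p(E_k)$, where $v_p$ denotes the $p$-adic valuation. Since $p \mid c_k^b(1) \mid b^k - 1$, we have $\gcd(b,p)=1$ and the order $g := \ord_p(b)$ divides $k$. Writing $v_p(c_k^b(n)) = v_p(b^{kn}-1) - v_p(b^n-1)$ and applying the lifting-the-exponent lemma, I expect to find $v_p(c_k^b(n)) = v_p(k)$ when $g \mid n$ and $v_p(c_k^b(n)) > v_p(k)$ otherwise, so the bound $v_p(c_k^b(n)) \ge v_p(k)$ holds in every case. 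The main obstacle is the prime $p=2$, where the lifting-the-exponent formula splits according to the parities of $n$ and of the relevant exponent; here I would first note that $2 \mid E_k$ forces $b$ odd and $k$ even (since for $b$ even $c_k^b(1) \equiv 1 \pmod 2$), and then verify $v_2(c_k^b(n)) \ge v_2(k)$ separately in the $n$-odd and $n$-even subcases. Once $E_k \mid c_k^b(n)$ is established for all $n$, we obtain $E_k \mid B_k$, and combining this with $B_k \mid C_k \mid D_k = E_k$ collapses the chain to $A_k = B_k = C_k = D_k = E_k$.
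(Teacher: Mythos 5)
Your proposal is correct, and its overall skeleton (reduce everything to a divisibility chain, then prove the hard inclusion $E_k \mid B_k$ prime by prime via lifting-the-exponent) matches the paper's; but the execution differs in two ways worth noting. First, your chain $B_k \mid C_k \mid D_k = E_k \mid B_k$ closes the loop using only the trivial fact that a gcd divides the gcd of any subset, so you never need the paper's separate argument that $B_k = C_k$, which it proves via the congruence $c_k^b(n) \equiv c_k^b(n \bmod k) \pmod{c_k^b(1)}$; your organization is slightly more economical. Second, and more substantively, for the key step $E_k \mid c_k^b(n)$ the paper avoids all of the case analysis you anticipate: it uses only the weak one-sided inequality
\begin{equation*}
\nu_p\!\left(\frac{c^n - 1}{c - 1}\right) \geq \nu_p(n) \quad \text{whenever } p \mid c-1,
\end{equation*}
which holds uniformly for every prime $p$, \emph{including} $p = 2$. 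Applied with $c = b^n$ when $p \mid b^n - 1$, it gives $\nu_p(c_k^b(n)) \geq \nu_p(k) \geq \nu_p(E_k)$ at once; and when $p \nmid b^n - 1$, one just observes that $p^{\nu_p(E_k)}$ divides the numerator $b^{kn}-1$ (because $E_k \mid b^k - 1 \mid b^{kn}-1$) while the denominator is prime to $p$. Your plan instead invokes the exact LTE valuation formulas (your case split on $\ord_p(b) \mid n$ is equivalent to the paper's split on $p \mid b^n - 1$), which yields sharper information --- exact valuations rather than a lower bound --- but at the price of the delicate $p = 2$ parity analysis you flag; that analysis does go through (for $2 \mid E_k$ one indeed needs $b$ odd and $k$ even, and then $\nu_2(c_k^b(n))$ equals $\nu_2(k)$ for $n$ even and $\nu_2(b+1) + \nu_2(k) - 1 \geq \nu_2(k)$ for $n$ odd), so your proof is complete once those computations are written out, but the paper's weak-inequality route renders them unnecessary.
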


\begin{proof}

\noindent $A_k = B_k$:   If $d$ divides
$B_k$, then it clearly also divides
all numbers of the form $a \cdot c_k^b (n)$ with
$b^{n-1} \leq a < b^n$ and hence $A_k$.  

On the other hand if
$d$ divides $A_k$, then it divides $c_k^b (1)$.
Furthermore, $d$ divides
$b^{n-1} \cdot c_k^b (n)$ and
$(b^{n-1} + 1) c_k^b (n)$ (both of which are members
of $\mS_k^b$ provided $n \geq 2$).  So it must divide
their difference, which is just $c_k^b (n)$.  So $d$ divides $B_k$.  

\medskip

\noindent $B_k = C_k$:
Note that $d$ divides $B_k$
if and only if it divides $c_k^b (1)$ and also
$c_k^b (n) \bmod c_k^b (1)$ for all $n \geq 1$.
Now it is well known that, for $b \geq 2$ and
integers $n, k \geq 1$, we have
$$b^n \equiv \modd{b^{n \bmod k}} {b^k - 1}.$$
Hence
\begin{align*}
c_k^b (n) &=
1 + b^n + \cdots + b^{(k-1)n} \equiv
\modd{1 + b^{n \bmod k} + \cdots + b^{(k-1)n \bmod k}} {b^k - 1} \\
&\equiv \modd{1 + b^a + \cdots + b^{(k-1)a}} {b^k-1}  \\
& \equiv \modd{1 + b^a + \cdots + b^{(k-1)a}} {c_k^b(1)} \\
& \equiv \modd{c_k^b (a)} {c_k^b (1)},
\end{align*}
where $a = n \bmod k$.  Thus any divisor of $C_k$
is also a divisor of $B_k$.  The converse is clear.
\medskip

\noindent $D_k = E_k$:  It suffices to observe that
\begin{align*}
c_k^b (k) &= 1 + b^k + \cdots + b^{(k-1)k} \\
&\equiv \modd{\overbrace{1 + 1 + \cdots + 1}^k} {b^k - 1} \\
&\equiv \modd{k} {b^k - 1} \\
& \equiv \modd{k} {\frac{b^k-1}{b-1}} \\
& \equiv \modd{k} {c_k^b (1)}.
\end{align*}

\medskip

\noindent $B_k = E_k$:  Every divisor of $B_k$
clearly divides $D_k$, and above we saw $D_k = E_k$.  We now show that every prime divisor
of $E_k$ divides $B_k$ to at least the same order, thus showing
that every divisor of $E_k$ divides $B_k$.  
We need the following classic lemma, sometimes called the ``lifting-the-exponent'' or LTE lemma \cite{C}:

\begin{lemma}\label{lem:LTE}
If $p$ is a prime number and $c \neq 1$ is an integer such that $p \mid c - 1$, then
\begin{equation*}
\nu_p\!\left(\frac{c^n - 1}{c - 1}\right) 
\geq \nu_p(n) ,
\end{equation*}
for all positive integers $n$, where $\nu_p (n)$ is the $p$-adic valuation of $n$ (the exponent of the highest power of $p$ dividing $n$).  
\end{lemma}

Fix an integer $\ell \geq 1$ and let $p$ be a prime factor of $E_k$.
On the one hand, if $p \mid b^\ell - 1$, then by Lemma~\ref{lem:LTE} we get that
\begin{equation*}
\nu_p\!\left(c_k^b(\ell)\right)=\nu_p\!\left(\frac{b^{k\ell} - 1}{b^\ell - 1}\right) \geq \nu_p(k) \geq \nu_p(E_k) ,
\end{equation*}
since $E_k \mid k$.  Hence $p^{\nu_p(E_k)} \mid c_k^b(\ell)$.
On the other hand, if $p \nmid b^\ell - 1$, then $p^{\nu_p(E_k)}$ divides $c_k^b(\ell) = \frac{b^{k\ell} - 1}{b^\ell - 1}$ simply because $p^{\nu_p(E_k)}$ divides the numerator but does not divide the denominator.
In both cases, we have that $p^{\nu_p(E_k)} \mid c_k^b(\ell)$, and since this is true for all prime divisors of $E_k$, we get that $E_k \mid c_k^b(\ell)$, as desired.
\end{proof}

\begin{remark}
For $b = 2$, the sequence $E_k$ is sequence \seqnum{A014491} in
Sloane's {\it Encyclopedia}.  We make some additional remarks
about the values of $E_k$ in Section~\ref{final}.
\end{remark}

In the remainder of the paper, for concreteness,
we focus on the case $b = 2$.  We set
$c_k (n) := c_k^2 (n)$ and $\mS_k := \mS_k^2$.    However, everything we say also
applies more generally to bases $b > 2$, with one minor
complication that is mentioned in Section~\ref{final}.

\section{Waring's theorem for binary $k$'th powers: proof outline and tools}

We now state the main result of this paper.

\begin{theorem}
Let $k \geq 1$ be an integer.  Then there is a number
$W(k) < \infty$ such that every sufficiently large multiple of $E_k = \gcd(2^k - 1,k)$
is representable as the sum of at most $W(k)$ binary
$k$'th powers.
\label{main}
\end{theorem}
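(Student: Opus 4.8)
The plan is to build representations scale by scale and reduce the theorem to a single-scale covering statement. The basic building block is the observation, already implicit in the definition of $\mS_k$, that the binary $k$'th powers of a fixed block length $n$ are exactly the multiples $a\, c_k(n)$ with $a \in [2^{n-1}, 2^n)$; hence summing $w$ of them realizes precisely the set $\{ s\, c_k(n) : w 2^{n-1} \le s \le w(2^n-1)\}$, a \emph{complete} arithmetic progression of common difference $c_k(n)$ spanning an interval of length $\Theta(w\, 2^{kn})$. Since intervals of the form $[X, 2^k X]$ cover all sufficiently large integers with overlaps, it suffices to fix a scale $n$ and prove: for some $W = W(k)$, every multiple of $E_k$ in an interval of length $\Theta(2^{kn})$ around $2^{kn}$ is a sum of at most $W$ binary $k$'th powers. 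First I would set up this reduction precisely.

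The mechanism for the single-scale statement is to combine a \emph{bounded} family of block lengths $n = n_1 > n_2 > \cdots > n_r$, chosen near the target scale, and to add the corresponding complete progressions. Summing $w$ powers at each $n_i$ produces the sumset $\sum_{i=1}^r c_k(n_i)\cdot[w 2^{n_i-1}, w(2^{n_i} - 1)]$, a sumset of arithmetic progressions with moduli $c_k(n_1), \ldots, c_k(n_r)$. By Theorem~\ref{sanna} the full family $\{c_k(n)\}_n$ has gcd $E_k$, and in fact a finite subfamily already achieves it (for instance $\gcd(c_k(1), c_k(k)) = E_k$), so I would select the $n_i$ within a window of the scale so that $\gcd(c_k(n_1), \ldots, c_k(n_r)) = E_k$. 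The goal is then a Frobenius / Chicken-McNugget-type covering lemma: once the coefficient ranges are long enough relative to the moduli, this sumset contains \emph{every} multiple of $E_k$ in a full sub-interval. Carrying this out reduces the theorem to estimating the joint Frobenius number of the chosen generators $c_k(n_i)$ and checking it is small compared to $2^{kn}$.

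The main obstacle is exactly the \emph{boundedness} of $W(k)$: obtaining a representation length that depends only on $k$, not on $N$. The difficulty is a scale gap — a single block length is coarse, with step $c_k(n) \approx 2^{(k-1)n}$, whereas the target resolution is $E_k = O(1)$, so a greedy or hierarchical descent through successively smaller block lengths closes the gap only by a constant factor in bit-length per level and therefore costs $\Theta_k(\log\log N)$ summands. A pairwise Frobenius estimate (two block lengths at a time) closes the gap in one step only when $(k-1)^2 \le k$, i.e. for $k = 2$; for $k \ge 3$ neither greedy nor pairwise filling gives a bounded count. Thus the crux is a genuine \emph{multi-generator} covering estimate: one must show that a family of $r = r(k)$ block lengths clustered near a common scale, with the gcd of their $c_k$-values equal to $E_k$, has joint Frobenius number $O_k(2^{kn})$ — small enough that the combined progressions fill a complete interval of multiples of $E_k$ — while both $r$ and the number of summands per generator stay bounded in terms of $k$ alone. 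Proving that such a bounded family exists near \emph{every} scale (using the periodicity of $c_k(n)$ modulo small primes, as in the lifting-the-exponent computation of Theorem~\ref{sanna}) and that its generators are ``coprime enough'' for the multi-generator bound to hold is where the real work lies.
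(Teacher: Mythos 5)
Your two preliminary observations are correct: summing $w$ binary $k$'th powers of a fixed block length $n$ realizes exactly the complete progression $c_k(n)\cdot[w2^{n-1},\,w(2^n-1)]$, and the theorem does reduce to a single-scale covering statement for intervals of length $\Theta(2^{kn})$ around $2^{kn}$. But the proposal stops exactly where the theorem begins: the ``multi-generator covering estimate'' that you defer is the entire content of the result, and nothing in the proposal supplies it or makes it plausible that it follows from Frobenius-type considerations. Quantitatively, your generators $c_k(n_i)\approx 2^{(k-1)n}$ are enormous compared with the target interval length $\Theta(2^{kn})$: general-purpose bounds for the Frobenius number of $r$ generators (Erd\H{o}s--Graham and their refinements) scale like $a_{\max}a_{\min}/r \approx 2^{2(k-1)n}/r$, so forcing this below $2^{kn}$ requires $r \gtrsim 2^{(k-2)n}$, which is unbounded for every $k \geq 3$ --- your own pairwise computation already exhibits this exponent gap for $r=2$, and increasing $r$ to any constant does not change the exponent. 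Nor can the gcd condition $\gcd(c_k(n_1),\ldots,c_k(n_r)) = E_k$ rescue this: gcd controls which residue classes are eventually hit, but says nothing at the scale $2^{kn}$ at which you need covering. Any proof must exploit the specific structure of the $c_k(n_i)$ as polynomials in $2^n$, and that step is entirely absent.

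It is instructive to compare with how the paper closes this gap: it makes no covering/Frobenius argument at the main scale at all. It takes the $k$ consecutive lengths $n,\ldots,n+k-1$, notes that the change of basis between $1, 2^n, \ldots, 2^{(k-1)n}$ and $c_k(n),\ldots,c_k(n+k-1)$ is the fixed Vandermonde matrix $M_k = V(1,2,\ldots,2^{k-1})$, and solves for the coefficients \emph{exactly} over $\Que$; positivity and correct size of the coefficients are arranged by first subtracting a large fixed combination $D$, and Lemma~\ref{split} then converts each coefficient into boundedly many admissible digits. The price is that the coefficients are rationals with denominator dividing $d_k = \det M_k$, and this is repaired not at scale $n$ but with binary $k$'th powers of \emph{small} block length, via the binary periodicity of $1/d_k$ (Lemmas~\ref{jef5} and~\ref{pows}), leaving a bounded error which is a multiple of $E_k$ and is finally absorbed using the constant Frobenius number $F_k$ of $E_k^{-1}\mS_k$ --- the only place a Frobenius argument appears, and only for a quantity bounded in terms of $k$. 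Note in particular that these correction terms live far below scale $n$, so the paper does not even establish your single-scale claim in the form you state it (all block lengths clustered near $n$), and it is not clear that claim is true. If you want to rescue your plan, the realistic route is to prove your covering lemma by exactly this linear-algebra-plus-periodicity mechanism --- at which point you will have reproduced the paper's proof rather than found an alternative to it.
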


\begin{remark}
The fact that $W(2) \leq 4$ was proved in
\cite{MNRS}.
\end{remark}

\begin{proof}[Proof sketch]
Here is an outline of the proof.  All of the mentioned constants depend only on $k$. 

Given a number $N$, a multiple of $E_k$, 
that we wish to represent as a sum of binary $k$'th powers, 
we first
choose a suitable power of $2$, say $x = 2^n$, and think of $N$ as a degree-$k$ polynomial $p$ evaluated
at $x$. For example, we
can represent $N$ in base $2^n$; the ``digits'' of this
representation
then correspond to the coefficients of $p$.

Similarly, the integers $c_k (n), c_k(n+1), \ldots,
c_k (n+k-1)$ can also be viewed as polynomials
in $x = 2^n$.  By linear algebra, there is a unique way
to rewrite $p$ as a linear combination of
$c_k (n), c_k(n+1), \ldots,
c_k (n+k-1)$, and this linear transformation can be
represented by a matrix $M$ that depends only on $k$, and
is independent of $n$.

At first glance, such a linear combination would seem to provide a suitable representation of $N$ in terms of binary $k$'th powers, but there are three problems to overcome: 
\begin{enumerate}[(a)]
\item the coefficients of $c_k (i)$,
$n \leq i < n+k$, could be much too large;
\item the coefficients could be too small or negative;
\item the coefficients might not be integers.
\end{enumerate}

Issue (a) can be handled by choosing $n$ such that $2^n \approx N^{1/k}$.  This guarantees that the resulting coefficients of the $c_k (n)$ are at most a constant factor larger than $2^n$.  Using
Lemma~\ref{split} below, the coefficients can
be ``split'' into at most a constant number of coefficients
lying in the desired range.

Issue (b) is handled by not working with $N$, but rather with
$Y := N - D$, where $D$ is a suitably chosen linear combination of $c_k (n), c_k (n+1), \ldots, c_k(n+k-1)$ with large positive integer coefficients.  Any negative coefficients
arising in the expression for $Y$ can now be offset by adding the large positive coefficients corresponding to $D$, giving us coefficients for the representation of $N$ that are positive and lie in a suitable range.

Issue (c) is handled by rounding down the coefficients of the linear combination to the next lower integer. 
This gives us a representation, as a sum of binary
$k$'th powers, for
some smaller number $N' < N$, where the difference
$N - N'$ is a sum 
of at most $k^2$ terms of the form $2^i/d$, where $d$ is the determinant of $M$.  However, the base-$2$ representation of $1/d$ is, disregarding leading zeros, actually periodic with some period $p$.  By choosing an
appropriate small multiple of a binary $k$'th power corresponding to $k$ copies of this period, we can
approximate each $2^i/d$, and hence $N - N'$,
from below by some number $N''$ that is a sum of binary
$k$'th powers.

The remaining error term is $Q := N - N' - N''$, which
turns out to be at most some constant depending on $k$.
Since $N$ is a multiple of $E_k$ and $N'$ and
$N''$ are sums of binary $k$'th powers, it follows
that $Q$ is also a multiple of $E_k$.  With care we can ensure that
$Q$ is larger than the Frobenius number of the binary $k$'th powers, and
hence $Q$ can be written as a sum of elements of
$\mS_k$.   On the other hand,
since $Q$ is a 
constant, at most a constant number of additional
binary $k$'th powers are needed to represent it.
This completes the sketch of our construction.  It
is carried out in more detail in the rest of the paper.
\end{proof}

\begin{remark}
In what follows, we spend a small amount of time explaining
that certain quantities are actually constants that depend
only on $k$.  By estimating these constants we could
come up with an explicit bound on $W(k)$, but we have not
done so.
\end{remark}

\subsection{Expressing multiples of $c_k (n)$ as
a sum of binary $k$'th powers}

As we have seen, a number of the form $a \cdot c_k (n)$
with $2^{n-1} \leq a < 2^n$ is a binary $k$'th power.
But how about larger multiples of $c_k (n)$?  The
following lemma will be useful.

\begin{lemma}
Let $a \geq 2^{n-1}$.  Then $a \cdot c_k (n)$ is the
sum of at most $\lceil \frac{a}{2^n -1} \rceil$ 
binary $k$'th powers.
\label{split}
\end{lemma}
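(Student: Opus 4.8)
The plan is to reduce the lemma to a purely arithmetic splitting problem. Since $a' \cdot c_k(n)$ is a binary $k$'th power for every integer $a'$ with $2^{n-1} \le a' < 2^n$ (equivalently $2^{n-1} \le a' \le 2^n - 1$), any decomposition $a = a_1 + \cdots + a_t$ into integers $a_i \in [2^{n-1},\, 2^n - 1]$ yields
$$a \cdot c_k(n) = \sum_{i=1}^t a_i\, c_k(n),$$
a sum of $t$ binary $k$'th powers. So it suffices to show that $a$ admits such a decomposition with $t \le \lceil a/(2^n-1)\rceil$.

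First I would dispose of the trivial case $a \le 2^n - 1$: then $a$ itself lies in $[2^{n-1}, 2^n - 1]$, so $a \cdot c_k(n)$ is already a single binary $k$'th power, and indeed $\lceil a/(2^n-1)\rceil = 1$. Otherwise set $t := \lceil a/(2^n-1)\rceil \ge 2$ and write $L := 2^{n-1}$, $U := 2^n - 1$. An integer $a$ is a sum of $t$ integers each lying in $[L, U]$ exactly when $tL \le a \le tU$, and whenever these bounds hold such a decomposition is easy to exhibit (take as many summands as possible equal to $U$, one intermediate summand, and the rest equal to $L$). Thus everything comes down to verifying the two inequalities $t \cdot 2^{n-1} \le a \le t(2^n - 1)$.

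The upper bound $a \le t(2^n - 1)$ is immediate from $t \ge a/(2^n-1)$. The lower bound is the only real point, and I expect it to be the main obstacle: one must rule out the possibility that forcing every summand to be at least $2^{n-1}$ requires strictly more than $t$ pieces. Here the definition of the ceiling gives $a > (t-1)(2^n-1)$, hence $a \ge (t-1)(2^n-1) + 1$ since $a$ is an integer, and the computation
$$(t-1)(2^n-1) + 1 - t \cdot 2^{n-1} = (t-2)(2^{n-1} - 1)$$
shows that the right-hand side is nonnegative for all $t \ge 2$ and $n \ge 1$, so $a \ge t \cdot 2^{n-1}$ as required. The borderline cases $n = 1$ (where $2^{n-1} - 1 = 0$) and $t = 2$ make the correction term vanish, so they are automatically consistent, which completes the argument.
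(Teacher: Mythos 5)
Your proposal is correct and follows essentially the same route as the paper: both reduce the lemma to writing $a$ as a sum of $t = \lceil a/(2^n-1)\rceil$ integers in $[2^{n-1},\,2^n-1]$, using as many maximal summands $2^n-1$ as possible and absorbing the remainder. The only cosmetic difference is that the paper exhibits the split explicitly (two nearly equal summands $d_1, d_2$ absorb the deficit $c$), whereas you verify the necessary and sufficient condition $t\cdot 2^{n-1} \le a \le t(2^n-1)$ for such a decomposition to exist; the key inequality $(t-1)(2^n-1)+1 - t\cdot 2^{n-1} = (t-2)(2^{n-1}-1) \ge 0$ is the same arithmetic fact underlying the paper's ``routine calculation.''
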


\begin{proof}
Clearly the claim is true for $2^{n-1} \leq a < 2^n$.  Otherwise,
define $b := \lceil \frac{a}{2^n -1} \rceil$ and
$c := (2^n-1)b - a$, so that $0 \leq c < 2^n - 1$.
Then $a = (b-2)(2^n-1) + d_1 + d_2$,
where $d_1 = \lfloor (2^n - 1) - \frac{c}{2} \rfloor$
and $d_2 = \lceil (2^n - 1) - \frac{c}{2} \rceil$.
A routine calculation now shows 
that $2^{n-1} \leq d_1 \leq
d_2 < 2^n$, and so $a \cdot c_k (n)$ is the sum
of $b$ binary $k$'th powers.
\end{proof}

\subsection{Change of basis and the Vandermonde matrix}
\label{vander}

In what follows, matrices and vectors are always indexed
starting at $0$.
Recall that a Vandermonde matrix 
$$ V(a_0, a_1, \ldots, a_{k-1})$$
is a $k\times k$ matrix where the entry in the $i$'th
row and $j$'th column, for $0 \leq i, j < k$, is
defined to be $a_i^j$.    The matrix is invertible if and only if the
$a_i$ are distinct.

Recall that $c_k (n) = 1 + 2^n + 2^{2n} + \cdots + 2^{(k-1)n}$.  For $k \geq 1$ and $n \geq 0$ we have
\begin{equation}
\left[ \begin{array}{c}
c_k (n) \\
c_k (n+1) \\
\vdots \\
c_k(n+k-1)
\end{array} \right] = M_k
\left[ \begin{array}{c}
1  \\
2^n \\
\vdots \\
2^{(k-1)n} 
\end{array} \right],
\label{kane}
\end{equation}
where $M_k = V(1, 2, 4, \ldots, 2^{k-1})$.   For example,
$$ M_4 = \left[ \begin{array}{cccc} 
1 & 1 & 1 & 1 \\
1 & 2 & 4 & 8 \\
1 & 4 & 16 & 64 \\
1 & 8 & 64 & 512
\end{array}
\right].$$
Let a natural number
$Y$ be represented as an $\Enn$-linear combination 
$$Y = a_0 + a_1 2^n + \cdots + a_{k-1} 2^{(k-1)n}.$$
Then, multiplying Eq.~\eqref{kane} on the left by
$$ [b_0 \quad b_1 \quad \cdots \quad b_{k-1}] := [a_0 \quad a_1 \quad \cdots \quad a_{k-1}] M_k^{-1},$$
we get the following expression for $Y$ as a $\Que$-linear
combination of binary $k$'th powers:
$$ Y = b_0 c_k (n) + b_1 c_{k}(n+1) + \cdots +
	b_{k-1} c_k(n+k-1) .$$
It remains to estimate the size of the coefficients $b_i$,
as well as the sizes of their denominators.

The Vandermonde matrix is well studied (e.g.,
\cite[pp.~43, 105]{PS}).  We recall one
basic fact about it.

\begin{lemma}
The determinant of $V(a_0, a_1, \ldots, a_{k-1})$ is
$$ \prod_{0 \leq i < j < k} (a_j - a_i) .$$
\label{van}
\end{lemma}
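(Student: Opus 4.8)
The plan is to prove the formula by induction on $k$, viewing the determinant as a polynomial in the single variable $a_{k-1}$ and pinning it down by locating its roots and its leading coefficient. The base case $k = 1$ is immediate, since $V(a_0)$ is the $1 \times 1$ matrix $[1]$ with determinant $1$, which matches the empty product. For the inductive step, I would treat $a_0, \ldots, a_{k-2}$ as fixed and set $t := a_{k-1}$, so that $P(t) := \det V(a_0, \ldots, a_{k-2}, t)$ is a polynomial in $t$ over the ring $\Zee[a_0, \ldots, a_{k-2}]$. Expanding the determinant along the last row $(1, t, t^2, \ldots, t^{k-1})$ shows that $\deg_t P \leq k-1$.

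The key observation is that $P(t)$ vanishes at $t = a_i$ for each $0 \leq i \leq k-2$: substituting $t = a_i$ makes the last row of the matrix equal to row $i$, forcing the determinant to be $0$. Working in the polynomial ring $\Zee[a_0, \ldots, a_{k-2}][t]$, where the distinct indeterminates $a_i$ make the linear factors $(t - a_i)$ pairwise coprime, this means that $\prod_{i=0}^{k-2} (t - a_i)$ divides $P(t)$. Since this product already has degree $k-1 \geq \deg_t P$, we conclude that
$$ P(t) = c \cdot \prod_{i=0}^{k-2} (t - a_i), $$
where $c \in \Zee[a_0, \ldots, a_{k-2}]$ is the leading coefficient of $P$.

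It remains to identify $c$, which is the step that needs the most care. The coefficient of $t^{k-1}$ in the cofactor expansion along the last row is $(-1)^{(k-1)+(k-1)}$ times the minor obtained by deleting the last row and last column; that minor is exactly $\det V(a_0, \ldots, a_{k-2})$, and the sign works out to $+1$. By the inductive hypothesis, $c = \prod_{0 \leq i < j \leq k-2} (a_j - a_i)$. Substituting $t = a_{k-1}$ back into the displayed factorization and combining the two products then yields $\det V(a_0, \ldots, a_{k-1}) = \prod_{0 \leq i < j < k} (a_j - a_i)$, completing the induction. The only genuine obstacle is the bookkeeping in this last step — verifying that the leading coefficient is the smaller Vandermonde determinant and that its sign is positive — and I would double-check it against the Laplace expansion. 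An equally clean alternative that sidesteps cofactor signs is to perform the column operations $C_j \gets C_j - a_0 C_{j-1}$ (for $j$ from $k-1$ down to $1$), which clears the first row, lets one factor $(a_i - a_0)$ out of each remaining row $i$, and reduces directly to $V(a_1, \ldots, a_{k-1})$, to which the inductive hypothesis applies.
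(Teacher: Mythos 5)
Your proof is correct, but it is worth noting that the paper does not prove this lemma at all: it is recalled as a classical fact about the Vandermonde matrix, with a citation to P\'olya and Szeg\H{o}, and the authors immediately move on to using it (to bound the determinant $d_k$ and the entries $\ell_k$ of $M_k^{-1}$). So your contribution is a self-contained proof of a statement the paper outsources to the literature. The argument you give is the standard one and is sound at every step: the degree bound from the Laplace expansion along the last row, the vanishing of $P(t)$ at $t = a_i$, the divisibility by $\prod_{i=0}^{k-2}(t - a_i)$ (legitimate because the $t - a_i$ are pairwise non-associate primes in the UFD $\Zee[a_0, \ldots, a_{k-2}, t]$, or alternatively by peeling off one factor at a time and using that $\Zee[a_0,\ldots,a_{k-2}]$ is an integral domain), and the identification of the leading coefficient as the cofactor of the $(k-1,k-1)$ entry, whose sign $(-1)^{2(k-1)} = +1$ you correctly verified. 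Your alternative via the column operations $C_j \gets C_j - a_0 C_{j-1}$ is also correct and arguably cleaner, since it avoids the coprimality discussion entirely and reduces directly to $V(a_1, \ldots, a_{k-1})$ with the factors $(a_i - a_0)$ extracted row by row. Either version would serve as a complete replacement for the paper's citation.
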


We now define $d_k$ to be the determinant of $M_k$,
and $\ell_k$ to be the largest of the absolute values of the entries of $M_k^{-1}$. Note that, by Lemma~\ref{van}, $d_k$ is positive.
Also, Laplace's formula tells us that $M_k^{-1} = M'_k d_k^{-1}$, where
$M'_k$ is the adjugate (classical adjoint) $M'_k$ of $M_k$. Furthermore, since $M_k$ has integer entries, so does $M'_k$.

\begin{proposition}
We have $0 < d_k < 2^{k^3/3}$ for $k \geq 1$.
\end{proposition}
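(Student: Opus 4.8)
The plan is to evaluate $d_k$ exactly using Lemma~\ref{van} and then estimate each factor crudely. Since $M_k = V(1, 2, 4, \ldots, 2^{k-1})$, Lemma~\ref{van} gives
$$d_k = \prod_{0 \le i < j < k} \left(2^j - 2^i\right).$$
The nodes $2^0 < 2^1 < \cdots < 2^{k-1}$ are distinct and strictly increasing, so every factor with $i < j$ is strictly positive. Hence $d_k > 0$, which recovers the positivity already noted after the definition of $d_k$ and gives the lower bound.

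For the upper bound, I would replace each factor by the cruder estimate $2^j - 2^i < 2^j$ and multiply, obtaining $d_k \le 2^E$, where
$$E = \sum_{0 \le i < j < k} j.$$
The exponent $E$ is then straightforward to compute: for each fixed $j$ there are exactly $j$ admissible indices $i \in \{0, 1, \ldots, j-1\}$, so each value $j$ contributes $j \cdot j$ and
$$E = \sum_{j=0}^{k-1} j^2 = \frac{(k-1)k(2k-1)}{6}.$$

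It then remains only to verify the strict inequality $E < k^3/3$. Clearing the common factor $k/6$, this reduces to $(k-1)(2k-1) < 2k^2$, i.e.\ to $2k^2 - 3k + 1 < 2k^2$, i.e.\ to $3k > 1$, which holds for every $k \ge 1$. Since $t \mapsto 2^t$ is increasing, chaining these estimates yields $d_k \le 2^E < 2^{k^3/3}$, as claimed.

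I do not anticipate any genuine difficulty here: the statement follows from the classical Vandermonde formula together with an elementary summation and a one-line inequality. The only points demanding minor care are the degenerate case $k=1$ (where the product defining $d_k$ is empty, so the factorwise bound should be stated as $d_k \le 2^E$, with the strict final inequality coming entirely from $E < k^3/3$) and the bookkeeping for the exponent sum $E$. The bound $2^{k^3/3}$ is intentionally loose; it could be sharpened, but this crude estimate is all that is needed in the later arguments.
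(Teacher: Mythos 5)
Your proof is correct and follows essentially the same route as the paper's: apply the Vandermonde determinant formula, bound each factor $2^j - 2^i$ by $2^j$, compute the exponent sum $\sum_{j=0}^{k-1} j^2 = k^3/3 - k^2/2 + k/6$, and compare with $k^3/3$. Your handling of the degenerate case $k=1$ (using $d_k \le 2^E$ so the empty product causes no issue, with strictness coming from $E < k^3/3$) is in fact slightly more careful than the paper's chain of strict inequalities.
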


\begin{proof}
By the formula of Lemma~\ref{van} we know
that 
$$d_k = \prod_{0 \leq i < j< k} (2^j - 2^i)
< \prod_{0 \leq i < j< k} 2^j = 2^{k^3/3 -k^2/2 + k/6} 
< 2^{k^3/3}$$
for $k \geq 1$.
\end{proof}

Our next result demonstrates that $\ell_k$, the
absolute value of the largest entry in
$M_k^{-1}$, is bounded above by a constant.

\begin{proposition}
We have $\ell_k < 34$.
\end{proposition}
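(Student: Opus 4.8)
The plan is to use the explicit description of $M_k^{-1}$ coming from Lagrange interpolation. Since $M_k = V(1,2,\ldots,2^{k-1})$ is exactly the evaluation (``coefficients-to-values'') matrix appearing in Eq.~\eqref{kane}, its inverse is the ``values-to-coefficients'' map, and therefore records the coefficients of the Lagrange basis polynomials for the nodes $2^0,2^1,\ldots,2^{k-1}$. Concretely, the entry of $M_k^{-1}$ in position $(j,i)$ equals the coefficient of $x^j$ in
\[
L_i(x) := \prod_{m \neq i}\frac{x-2^m}{2^i-2^m}.
\]
This is the standard description of the inverse of a Vandermonde matrix, and it can also be verified directly against Eq.~\eqref{kane}. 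Thus bounding $\ell_k$ reduces to bounding the coefficients of the polynomials $L_i$.

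First I would bound a single coefficient. The numerator $\prod_{m\neq i}(x-2^m)$ has all roots positive, so its coefficients alternate in sign; consequently the sum of the absolute values of its coefficients equals $\lvert p(-1)\rvert = \prod_{m\neq i}(1+2^m)$. Since any single coefficient is at most this sum, and the denominator of $L_i$ is the constant $\prod_{m\neq i}(2^i-2^m)$, every entry of $M_k^{-1}$ is bounded in absolute value by
\[
R_i := \prod_{m\neq i}\frac{1+2^m}{\lvert 2^i-2^m\rvert},
\]
so that $\ell_k \le \max_{0\le i<k} R_i$. It then remains to show that $R_i < 34$ for every $i$.

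Next I would split $R_i$ according to whether $m<i$ or $m>i$. For the factors with $m<i$, writing $s=i-m$ and using $1+2^m\le 2^{m+1}$, each factor is at most $\tfrac{2}{2^s-1}$; since only the $s=1$ factor exceeds $1$, this partial product is at most $2$. For the factors with $m>i$, writing $t=m-i$, each factor equals $\tfrac{2^t+2^{-i}}{2^t-1}\le \tfrac{2^t+1}{2^t-1}$, and the infinite product $\prod_{t\ge 1}\tfrac{2^t+1}{2^t-1}$ converges; evaluating its first few factors explicitly and bounding the tail by a geometric series shows it is less than $9$. Because both bounds hold for the \emph{same} index $i$, multiplying them gives $R_i\le 2\cdot 9 = 18 < 34$ for all $i$, as desired.

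I expect the main obstacle to be the factors with $m>i$. The crude estimate $1+2^m\le 2^{m+1}$ that disposes of the $m<i$ factors is useless here, since it turns each factor into roughly $\tfrac{2^{t+1}}{2^t-1}\approx 2$ and makes the product diverge; the key is to retain the factor in the exact form $\tfrac{2^t+2^{-i}}{2^t-1}$ and exploit the rapid convergence of $\prod_{t\ge1}\tfrac{2^t+1}{2^t-1}$. The remaining steps are routine estimates, and the argument in fact leaves a comfortable margin below the stated constant $34$.
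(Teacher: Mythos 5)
Your proof is correct, and its first half coincides with the paper's: both identify the columns of $M_k^{-1}$ with the coefficient vectors of the Lagrange basis polynomials at the nodes $1, 2, \ldots, 2^{k-1}$, and both bound each entry of column $i$ by the ratio $R_i = \prod_{m \neq i} (1+2^m)/\lvert 2^i - 2^m\rvert$ (the paper via the general inequality $\sum_j \lvert c_j\rvert \le \prod_j (1+\lvert b_j\rvert)$ for a monic polynomial with real roots $b_j$, you via the exact equality available when all roots are positive). Where you genuinely diverge is in how this ratio is estimated. The paper decouples numerator from denominator: it enlarges the numerator to the $i$-independent product $\prod_{0 \le j < k}(2^j+1)$ and then proves two monotonicity inequalities showing that the denominator $Q_k(i)$ is minimized at $i = 1$, arriving at $2 \cdot 3 \cdot \prod_{j \ge 2} \frac{2^j+1}{2^j-2} \approx 33.02 < 34$. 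You instead keep numerator and denominator paired factor-by-factor for the \emph{same} index $i$, splitting into the factors with $m < i$ (bounded by $2$, since only $s = i-m = 1$ gives a factor exceeding $1$) and those with $m > i$ (bounded by the convergent product $\prod_{t \ge 1}\frac{2^t+1}{2^t-1} \approx 8.26 < 9$). This buys you two things: you avoid the monotonicity lemma for $Q_k(i)$ entirely, and you get the sharper bound $\ell_k < 18$. The one place where you should be slightly careful is the claim that the infinite product is less than $9$: with the crude tail bound $\frac{2}{2^t-1} \le 2^{2-t}$ and $\prod(1+x_t) \le \exp\!\left(\sum x_t\right)$, taking only four explicit factors gives roughly $7.29 \cdot e^{1/4} > 9$, so you need about five explicit factors (giving roughly $7.76 \cdot e^{1/8} < 8.8$) before the tail estimate closes; this is routine, as you say, but it is the only step requiring actual numerics. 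Both arguments are valid proofs of $\ell_k < 34$.
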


\begin{proof}
As is well known 
(see, e.g., \cite[Exercise 1.2.3.40]{K}, the $i$'th column in the
inverse of the Vandermonde matrix
$V(a_0, a_1, \ldots, a_{k-1})$ consists
of the coefficients of the polynomial 
$$ p(x) := \frac{\prod_{{0 \leq j< k} \atop {j\not=i}}(x-a_i)}{\prod_{{0 \leq j< k} \atop {j\not=i}} (a_j - a_i)}.$$

We also observe that if
$$(x-b_1)(x-b_2) \cdots (x-b_n) =
	x^n + c_{n-1} x^{n-1} + \cdots + c_1x + c_0,$$
is a polynomial with real roots, then the absolute value of every
coefficient $c_i$ is bounded by
$$ |c_0| + \cdots + |c_{n-1}| \leq 	
\prod_{1 \leq i \leq n}  (1 + |b_i|) .$$

Putting these two facts together, we see that all of the entries in the $i$'th column of
$V(a_0, a_1, \ldots, a_{n-1})^{-1}$ are, in absolute
value, bounded by 
$$ P_k (i) := \frac{\prod_{{0 \leq j< k} \atop {j\not=i}}(1+ |a_j|)}{\prod_{{0 \leq j< k} \atop {j\not=i}} |a_j - a_i|}.$$

Now let's specialize to $a_i = 2^i$.  We get
$$
P_k (i)  := \frac{\prod_{{0 \leq j< k} \atop {j\not=i}}(2^j +1)}{\prod_{{0 \leq j< k} \atop {j\not=i}} |2^j - 2^i|} \\
\leq 
\frac{\prod_{0 \leq j< k}(2^j + 1)}{\prod_{{0 \leq j< k} \atop {j\not=i}} |2^j - 2^i |}. 
$$
To finish the proof of the upper bound, it remains to find a lower bound for the denominator
$$ Q_k (i) := \prod_{{0 \leq j < k} \atop {j \not= i}}
|2^j - 2^i|  .$$
We claim, for $k \geq 2$, that
\begin{equation}
Q_k (0) \geq Q_k (1)  \label{ineq1}
\end{equation}
and
\begin{equation}
Q_k (1) \leq Q_k (2) \leq \cdots \leq Q_k (k-1).
\label{ineq2}
\end{equation}

To see \eqref{ineq1}, note that
$Q_k (0) = \prod_{2 \leq j < k} (2^j - 1)$ and
$Q_k (1) = \prod_{2 \leq j < k}  (2^j - 2)$.
On the other hand, by telescoping cancellation we
see, for $1 \leq i \leq k - 2$, that
$$ \frac{Q_k (i)}{Q_k (i+1)} =
\frac{2^{k-1} - 2^i}{(2^{i+1} - 1)2^{k-2}} <
\frac{2^{k-1}}{3 \cdot 2^{k-2}} = \frac23 ,$$
which proves \eqref{ineq2}.   Hence $Q_k (i)$ is minimized at
$i = 1$.
Now
\begin{align*}
\ell_k &\leq \frac{\prod_{0 \leq j < k} (2^j + 1)}{Q_k (i)} 
 \leq \frac{\prod_{0 \leq j < k} (2^j + 1)}{Q_k (1)}  \\[1pt]
& =  \frac{\prod_{0 \leq j < k} (2^j + 1)}{\prod_{2 \leq j < k} (2^j - 2)} 
 < 2 \cdot 3 \cdot \prod_{j \geq 2} \frac{2^j + 1}{2^j - 2}
\doteq 33.023951743\cdots < 34.
\end{align*}
\end{proof}

\begin{remark}
The tightest upper bound seems to be
$\ell_k < 5.194119929183\cdots$ for all $k$, but we
did not prove this.
\end{remark}

\subsection{Expressing fractions of powers of $2$ as
sums of binary $k$'th powers}
\label{fractions}

In everything that follows, $k$ is an integer greater than $1$.

\begin{lemma}
Let $f > 1$ be an odd integer.  Define
$e = \lfloor \log_2 f \rfloor$, so that 
$2^e < f < 2^{e+1}$.  Let $m$ be the order
of $2$ in the multiplicative group of integers
modulo $f$.  Then for all integers $j \geq 1$, the
number
$$ \left\lfloor \frac{2^{jm+e}}{f} \right\rfloor$$
is a binary $j$'th power, whose base-$2$ representation consists of $j$ repetitions of a block of size $m$.
\label{jef5}
\end{lemma}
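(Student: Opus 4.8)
The plan is to understand the fraction $\frac{1}{f}$ in binary, exploit the periodicity coming from the order $m$ of $2$ modulo $f$, and then show that the truncation $\lfloor 2^{jm+e}/f \rfloor$ exactly captures $j$ complete periods of the binary expansion, which is precisely the definition of a binary $j$'th power.

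First I would set up the periodicity. Since $f$ is odd and $\gcd(2,f)=1$, the order $m=\ord_f(2)$ is well-defined, and by definition $2^m \equiv 1 \pmod{f}$, so $f \mid 2^m - 1$. Write $2^m - 1 = f \cdot g$ for a positive integer $g$; then $\frac{1}{f} = \frac{g}{2^m - 1} = g \sum_{i \geq 1} 2^{-im}$, which exhibits $\frac{1}{f}$ as having a purely periodic binary expansion with period exactly $m$ (the repeating block being the $m$-bit binary representation of $g$). The key point is that $g = \frac{2^m-1}{f}$ satisfies $0 < g < 2^m - 1 < 2^m$, so $g$ fits in an $m$-bit block and that block is the repeated period.

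Next I would compute the truncation directly. The claim is that $\left\lfloor \frac{2^{jm+e}}{f} \right\rfloor$ equals $g \cdot c_j(m)$, where $c_j(m) = \frac{2^{jm}-1}{2^m-1} = 1 + 2^m + \cdots + 2^{(j-1)m}$ is the "all-ones in base $2^m$" number, so that $g \cdot c_j(m)$ is literally $j$ copies of the $m$-bit block $g$ concatenated. To verify this, I would write $2^{jm+e} = 2^e \cdot 2^{jm}$ and compute modulo $f$. Using $2^{jm} \equiv 1 \pmod{f}$ (since $m$ is the order) and the identity $g(2^{jm}-1) = (2^m-1)^{-1}\cdot f^{-1}$-style manipulation, one gets $2^{jm+e} = f \cdot \left(g \cdot c_j(m) \cdot \tfrac{2^e(2^m-1)}{\text{something}}\right) + r$; more cleanly, I would show $f \cdot g \cdot c_j(m) = (2^m-1)c_j(m) = 2^{jm}-1$, hence $2^{jm+e} = 2^e(2^{jm}-1) + 2^e = f \cdot (2^e g c_j(m)) + 2^e$, which isn't quite the target. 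The cleaner route: show that $g \cdot c_j(m) \le \frac{2^{jm+e}}{f} < g\cdot c_j(m) + 1$ by bounding the remainder, using $2^e < f < 2^{e+1}$ to control the fractional part.

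The main obstacle will be pinning down the floor exactly, i.e. showing the remainder $2^{jm+e} - f \cdot g \cdot c_j(m)$ lies in the interval $[0, f)$ so that the floor is exactly $g \cdot c_j(m)$. I expect this to reduce to controlling $2^e \bmod f$ together with the geometric-sum telescoping; the choice $e = \lfloor \log_2 f \rfloor$ is exactly what makes $2^e/f$ land in the right range ($1/2 < 2^e/f < 1$). Once the floor is identified as $g \cdot c_j(m)$, the final step is immediate: since $0 < g < 2^m$, the number $g \cdot c_j(m)$ has base-$2$ representation consisting of $j$ identical blocks of length $m$ (the block being $g$ written in $m$ bits, possibly with leading zeros internally but with a nonzero leading bit overall since the top block starts the representation), which is exactly the definition of a binary $j$'th power whose period has size $m$. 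I would finish by confirming the leading block has no leading zero, which follows from $g \ge 1$ and the standard convention, completing the argument.
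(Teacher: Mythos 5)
Your overall strategy---expand $1/f$ binary-periodically via $2^m - 1 = fg$ and identify the floor as a repeated block times $c_j(m) = 1 + 2^m + \cdots + 2^{(j-1)m}$---is the same as the paper's, but your identification of the block is wrong, and the error is not cosmetic. The floor equals $2^e g \cdot c_j(m)$, not $g \cdot c_j(m)$: indeed your own computation produces the identity $2^{jm+e} = f \cdot \left(2^e g\, c_j(m)\right) + 2^e$ with $0 < 2^e < f$, which pins down the floor immediately and exactly; you then discard it as ``not quite the target'' because you were aiming at the wrong target. The ``cleaner route'' you propose instead, namely $g\, c_j(m) \leq 2^{jm+e}/f < g\, c_j(m) + 1$, is false whenever $e \geq 1$: the remainder $2^{jm+e} - f g\, c_j(m) = 2^{jm}(2^e - 1) + 1$ is enormous, nowhere near the interval $[0,f)$. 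Concretely, for $f = 5$ (so $e = 2$, $m = 4$, $g = 3$) and $j = 2$, one has $\lfloor 2^{10}/5 \rfloor = 204 = 11001100_2$, a binary square, whereas your candidate $g\, c_2(4) = 51 = 110011_2$ is not.

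The second gap is in your final step, where you permit the repeated block to have leading zeros. A binary $j$'th power must have its canonical representation consist of $j$ identical blocks, so the block itself must begin with a $1$; equivalently, the multiplier $a$ in $a \cdot c_j(m)$ must satisfy $2^{m-1} \leq a < 2^m$. Your block $g$ fails this badly: the hypothesis $2^e < f < 2^{e+1}$ forces $2^{m-e-1} < g < 2^{m-e}$, so the $m$-bit writing of $g$ has $e$ leading zeros, and $g\, c_j(m)$ is simply not a binary $j$'th power (as the example above shows). This is exactly why the factor $2^e$ matters: it shifts the block to full length, and the real remaining work---the entire second half of the paper's proof, absent from yours---is the verification that $2^{m-1} \leq 2^e g < 2^m$. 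That follows from writing $2^e g = \frac{2^{m+e}}{f} - \frac{2^e}{f}$ and applying the two-sided bound $2^e < f < 2^{e+1}$. With that estimate supplied, the identity you derived and abandoned turns your sketch into the paper's proof.
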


\begin{proof}
Since $m$ is the multiplicative order of $2$ modulo $f$, we have $2^m - 1 = fq$ for some positive integer $q$.  Then
$$ \frac{1}{f} = \frac{q}{2^m - 1} = q \sum_{i \geq 1} 2^{-im}.$$
Multiplying by $2^{jm+e}$ and splitting the summation into two pieces, we see that
\begin{align}\label{twojmeoverf}
\frac{2^{jm+e}}{f} &= q \cdot 2^{jm+e} \sum_{i \geq 1} 2^{-im} \nonumber\\
&= q \cdot 2^{jm+e} \sum_{1 \leq i \leq j} 2^{-im} +
q \cdot 2^{jm+e} \sum_{i > j} 2^{-im} \nonumber\\
&= q \cdot 2^e \cdot \frac{2^{jm}-1}{2^m - 1} + 
\frac{2^e}{f} . 
\end{align}
Since $2^e < f$, the right-hand side of Eq.~(\ref{twojmeoverf}) is the
sum of an integer and a number strictly between
$0$ and $1$.  It follows that
$$ \left\lfloor \frac{2^{jm+e}}{f} \right\rfloor
	= q \cdot 2^e \cdot \frac{2^{jm}-1}{2^m - 1}.$$
It remains to see that $q \cdot 2^e$ is in the right
range:  we must have $2^{m-1} \leq q \cdot 2^e < 2^m$.

To see this, note that 
$$
q \cdot 2^e = \frac{2^m -1}{f} \cdot 2^e \\
= \frac{2^{m+e}}{f} - \frac{2^e}{f} ,
$$
and, since $0 < 2^e / f < 1$, it follows that
$$ \frac{2^{m+e}}{f} - 1 < q \cdot 2^e < \frac{2^{m+e}}{f} .$$
Rewriting gives
$$ 2^{m-1} - 1 < 2^{m-1} \left( \frac{2^{e+1}}{f} \right) - 1 <  q \cdot 2^e \leq \left( \frac{2^e}{f} \right) 2^m < 2^m,$$
or $2^{m-1} \leq q \cdot 2^e < 2^m$, as desired.
\end{proof}
 
\begin{lemma}
Let $g$ be an integer with $g = 2^\ell \cdot f$,
where $f \geq 1$ is odd.  
Then for all $n \geq kf + \ell + \log_2 f$, the number
$ \lfloor \frac{2^n}{g} \rfloor$ can be written
as the sum of at most $2^{kf-1}$ binary $k$'th powers
and an integer $t$ with $0 \leq t \leq 2^{kf-1}$.
\label{pows}
\end{lemma}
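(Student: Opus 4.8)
The plan is to reduce to the odd part of $g$ and then peel off a single large binary $k$'th power supplied by Lemma~\ref{jef5}, quarantining the inevitable misalignment inside Lemma~\ref{split}. First I would use $g = 2^\ell f$ and $2^\ell \mid 2^n$ to rewrite $\lfloor 2^n/g\rfloor = \lfloor 2^{n-\ell}/f\rfloor$; setting $N := n-\ell$, the hypothesis becomes $N \ge kf + \log_2 f \ge kf + e$, where $e := \lfloor \log_2 f\rfloor$. The degenerate case $f = 1$ I would dispatch directly: here $\lfloor 2^n/g\rfloor = 2^N$ with $N \ge k$, and writing $N = kL_0 + r$ with $L_0 := \lfloor N/k\rfloor \ge 1$ and $0 \le r < k$ gives the identity $2^N = 2^r(2^{kL_0}-1) + 2^r = \big(2^r(2^{L_0}-1)\big)\,c_k(L_0) + 2^r$. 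Since the multiplier $2^r(2^{L_0}-1) \ge 2^{L_0-1}$, Lemma~\ref{split} expresses the first term as at most $2^r \le 2^{k-1}$ binary $k$'th powers, and I would take $t = 2^r \le 2^{k-1} = 2^{kf-1}$.

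For the main case $f > 1$, let $m := \ord_f(2)$, noting $m \le \phi(f) < f$. By Lemma~\ref{jef5}, for every $j \ge 1$ the number $\lfloor 2^{jm+e}/f\rfloor = a\,c_j(m)$ is a binary $j$'th power whose repeated block $a$ has length $m$, so that $2^{m-1}\le a < 2^m$. The key step is to align $N$ to this family: I would set $\tau := (N-e) \bmod (km)$ and $A' := \lfloor (N-e)/(km)\rfloor \ge 1$, so that $N' := N - \tau = kA'm + e$ and hence $P^* := \lfloor 2^{N'}/f\rfloor = a\,c_{kA'}(m)$. Using $c_{kA'}(m) = c_{A'}(m)\,c_k(A'm)$, one regroups the $kA'$ blocks into $k$ blocks of length $L := A'm$, giving $P^* = a'\,c_k(L)$ with $a' := a\,c_{A'}(m)$; from $2^{m-1}\le a < 2^m$ one checks $2^{L-1}\le a' < 2^L$, so $P^*$ is a single binary $k$'th power. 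The proposed decomposition is then
$$\lfloor 2^N/f\rfloor = 2^\tau P^* + t, \qquad t := \lfloor 2^N/f\rfloor - 2^\tau P^* .$$

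The main obstacle is the shift factor $2^\tau$: although $P^*$ is a binary $k$'th power, $2^\tau P^*$ in general is not, because appending $\tau$ trailing zeros to $k$ identical blocks fails to produce $k$ identical blocks unless $\tau = 0$. The decisive move is to absorb the shift into the multiplier and apply Lemma~\ref{split}: since $2^\tau P^* = (2^\tau a')\,c_k(L)$ with $2^\tau a' \ge 2^{L-1}$, Lemma~\ref{split} writes it as at most $\lceil 2^\tau a'/(2^L-1)\rceil$ binary $k$'th powers. Two numerical bounds then finish the argument. For the count, $2^\tau a' < 2^{L+\tau}$ and $2^L - 1 \ge 2^{L-1}$ yield at most $2^{\tau+1} \le 2^{km} \le 2^{k(f-1)} \le 2^{kf-1}$ powers, using $m < f$. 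For the leftover, the relation $P^* = \lfloor 2^{N'}/f\rfloor$ with $N = N' + \tau$ gives $0 \le 2^\tau P^* \le \lfloor 2^N/f\rfloor < 2^\tau P^* + 2^\tau$, whence $0 \le t < 2^\tau \le 2^{km-1} \le 2^{kf-1}$, again because $m < f$. The only genuinely delicate point is recognizing that the period-and-offset bookkeeping of Lemma~\ref{jef5} produces a clean $k$-block power exactly at exponents $\equiv e \pmod{km}$, so that the residual shift $\tau$ can be isolated and handled entirely within Lemma~\ref{split}.
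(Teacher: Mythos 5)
Your proof is correct and follows essentially the same route as the paper's: reduce to the odd part $f$, split into the cases $f=1$ and $f>1$, apply Lemma~\ref{jef5} at an exponent aligned to $e \pmod{km}$, and peel off the residual power-of-two shift as a small error term. The only difference is one of explicitness---the paper leaves the regrouping of $kA'$ blocks into $k$ blocks and the absorption of the shift $2^\tau$ via Lemma~\ref{split} implicit, whereas you spell both out (correctly).
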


\begin{proof}
There are two cases:  (a) $f = 1$ or (b) $f > 1$.

\medskip

\noindent (a) $f = 1$:  Using the division algorithm write
$n - \ell = rk + i$ for $0 \leq i\leq k-1$.  
Since $n \geq \ell$ we have
$$ \left\lfloor \frac{2^n}{g} \right\rfloor = 
\frac{2^n}{g} = 2^{n-\ell} = 2^{rk+i} = 2^i (2^{rk}- 1) + 2^i.$$
The base-$2$ representation of $2^{rk}-1$ is clearly
a binary $k$'th power.   Take $t = 2^i$. 

\medskip
 
\noindent (b) $f > 1$:  
Let $e = \lfloor \log_2 f \rfloor$ and
let $m$ be the order
of $2$ in the multiplicative group of integers
modulo $f$.

Using the division algorithm, write
$n - \ell - e = rkm + i$ for some $i$ with
$0 \leq i \leq km - 1$.  Note that since
$n \geq kf + \ell + \log_2 f \geq km + \ell + e$
we have $r \geq 1$.  

Then
\begin{align*}
\frac{2^n}{g} &= \frac{2^{rkm+i+\ell+e}}{2^\ell \cdot f}
= \frac{2^{rkm+i+e}}{f} \\
&= 2^i \cdot \frac{2^{rkm+e}}{f} = 
2^i \left\lfloor \frac{2^{rkm+e}}{f} \right\rfloor + t,
\end{align*}
with $0 \leq t < 2^i$.
Now take the floor of both sides and apply Lemma~\ref{jef5}.
\end{proof}

\subsection{The Frobenius number}

Let $S$ be a set and $x$ be a real number.
By $xS$ we mean the set $\{ xs \ : \ s \in S \}$.

Let $S \subseteq \Enn$ with $\gcd(S) = 1$.  The
Frobenius number of $S$, written $F(S)$, is the largest integer that cannot be represented as a non-negative integer linear combination of elements of $S$.   See, for example, \cite{RA}.

As we have seen, $\gcd(\mS_k) = E_k = \gcd(k, 2^k - 1)$.
Thus $\gcd(E_k^{-1} \mS_k) = 1$.  Define $F_k$ to be the
Frobenius number of the set $E_k^{-1} \mS_k$.  In this section we give a weak upper bound for $F_k$.

\begin{lemma}
For $k \geq 2$ we have
$F_k \leq 2^{k^2+k}$.
\label{frobl}
\end{lemma}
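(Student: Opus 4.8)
The plan is to reduce everything to residues modulo $2^k - 1 = c_k(1)$. First I would record two facts. Since $c_k(1) = 2^k - 1$ is itself a binary $k$'th power (take $n = 1$, $a = 1$), Lemma~\ref{split} shows that \emph{every} positive multiple of $2^k - 1$ is a sum of binary $k$'th powers. Second, by Theorem~\ref{sanna} we have $\gcd(c_k(1), \ldots, c_k(k)) = E_k$ with $E_k \mid 2^k - 1$; hence, working in the group $\Zee/(2^k-1)\Zee$, the residues of $c_k(2), \ldots, c_k(k)$ generate the cyclic subgroup $H$ consisting of the multiples of $E_k$, which has order $q := (2^k-1)/E_k < 2^k$. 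Because $H$ is finite, the submonoid it generates under addition is all of $H$, so every element of $H$ is a \emph{non-negative} integer combination of these residues.

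Next I would reduce the Frobenius estimate to a representability claim. It suffices to show that every multiple $M$ of $E_k$ with $M > E_k\,2^{k^2+k}$ is a sum of binary $k$'th powers, since translating to $T := E_k^{-1}\mS_k$ this is exactly the assertion $F_k \le 2^{k^2+k}$. Given such an $M$, set $\rho := M \bmod (2^k-1)$; as $E_k$ divides both $M$ and $2^k-1$, we have $\rho \in H$. I would then solve $\sum_{i=2}^k \lambda_i\, c_k(i) \equiv \rho \pmod{2^k-1}$ with $0 \le \lambda_i < q$ (possible since reducing each coefficient modulo $q$ leaves the value in $H$ unchanged, because $q\,c_k(i) \equiv 0 \pmod{2^k-1}$), and afterwards add just enough copies of $q$ to each $\lambda_i$ so that $2^{i-1} \le \lambda_i < 2^{i-1} + q < 2^{k+1}$, again without disturbing the congruence. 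By Lemma~\ref{split} each term $\lambda_i\, c_k(i)$ is then a sum of binary $k$'th powers, so $R := \sum_{i=2}^k \lambda_i\, c_k(i)$ is one as well, and $R \equiv \rho \pmod{2^k-1}$.

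Finally I would bound the size of $R$ and conclude. Using $c_k(i) < 2^{(k-1)i+1}$ and summing the geometric tail gives $\sum_{i=2}^k c_k(i) < 2^{k^2-k+2}$, whence $R < 2^{k+1}\cdot 2^{k^2-k+2} = 2^{k^2+3} \le 2^{k^2+k}$ for $k \ge 3$, so in particular $R < E_k\,2^{k^2+k} < M$. Then $M - R$ is a positive multiple of $2^k-1$, hence a sum of binary $k$'th powers by the first fact, and $M = R + (M-R)$ is one too; the remaining case $k = 2$ is immediate, since $3$ and $10$ are coprime binary squares whose two-element Frobenius number is $17 < 64 = 2^{k^2+k}$. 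I expect the main obstacle to be the coefficient bound in the second paragraph: the whole argument hinges on being able to realize every target residue with coefficients below roughly $2^k$, which is precisely what the order estimate $|H| = q < 2^k$ provides, and it is this together with the size bound $c_k(i) < 2^{(k-1)i+1}$ that forces $R$ below the threshold $2^{k^2+k}$.
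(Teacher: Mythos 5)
Your proof is correct, but it takes a genuinely different route from the paper's. The paper's argument is a short reduction to Sylvester's two-generator formula: it picks the three binary $k$'th powers $g_1 = 2^k-1$, $g_2 = (2^k-2)\,c_k(k)$, $g_3 = (2^k-1)\,c_k(k)$, checks via Theorem~\ref{sanna} that $\gcd(g_1,g_2,g_3) = E_k$, discards $g_3$ (since $g_1 \mid g_3$), and applies $F(\{a,b\}) = ab-a-b$ to $a = E_k^{-1}g_1$, $b = E_k^{-1}g_2$, giving $F_k \leq E_k^{-2}g_1 g_2 \leq (2^k-2)(2^{k^2}-1) < 2^{k^2+k}$. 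You instead prove representability directly: every multiple $M$ of $E_k$ exceeding $E_k 2^{k^2+k}$ is exhibited as a sum of binary $k$'th powers, by hitting the residue $M \bmod (2^k-1)$ with a bounded combination $R = \sum_{i=2}^k \lambda_i\, c_k(i)$ --- using Theorem~\ref{sanna} to see that these residues generate the subgroup of multiples of $E_k$ in $\Zee/(2^k-1)\Zee$ (and the finiteness of that group to upgrade subgroup generation to monoid generation), Lemma~\ref{split} to turn each $\lambda_i c_k(i)$ into honest binary $k$'th powers, and copies of $2^k-1$ to absorb the difference $M-R$. This is, in effect, the residue-covering argument that underlies Sylvester's formula, carried out by hand with the generators $c_k(2),\ldots,c_k(k)$ in place of the single element $g_2$. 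What the paper's route buys is brevity: one citation does all the counting. What yours buys is self-containedness (no appeal to the formula $F(\{a,b\})=ab-a-b$ except in your $k=2$ fallback) at the cost of more bookkeeping, including the separate treatment of $k=2$, where your crude bound $R < 2^{k^2+3}$ fails to sit below $2^{k^2+k}$; your fallback there (the coprime binary squares $3$ and $10$, with Frobenius number $17 < 2^6$) is correct --- and is itself a small instance of the paper's two-generator approach.
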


\begin{proof}
Consider $T = \{g_1, g_2, g_3 \}$ where
$g_1 = 2^k - 1$, $g_2 = (2^k - 2) \frac{2^{k^2} - 1}{2^k -1}$, and $g_3 = (2^k - 1) \frac{2^{k^2} - 1}{2^k -1}$.  We have
$T \subseteq \mS_k$.    Let $d$ be the greatest
common divisor of
$T$.  Then $d$ divides $g_3 - g_2 = \frac{2^{k^2} - 1}{2^k -1}$ and $g_1 = 2^k - 1$.  So $d$ divides $D_k$. On the other hand, clearly, $A_k$ divides $d$, while from Theorem~\ref{sanna} we know that $A_k = D_k = E_k$. Hence, $d = E_k$.

Clearly $F(E_k^{-1} \mS_k) \leq F(E_k^{-1} T)$.  
Furthermore, since
$g_1 \divides g_3$, it follows that
$F(E_k^{-1} T) = F(\{ E_k^{-1} g_1, E_k^{-1} g_2 \})$.   By a well-known result (see, e.g., \cite[Theorem 2.1.1, p.~31]{RA}), we have $F(\{a,b\}) = ab - a - b$, and the desired claim follows.
\end{proof}

\begin{remark}
We compute explicitly that $F_2 = 17$, $F_3 = 723$, $F_4 = 52753$, $F_5 = 49790415$, and $F_6 = 126629$.
\end{remark}


\section{The complete proof}

We are now ready to fill in the details of the proof of our 
main result, Theorem~\ref{main}.  We recall the definitions of the following
quantities that will figure in the proof:
\begin{itemize}
\item $c_k (n) = 1 + 2^n + \cdots + 2^{(k-1)n}$;
\item $E_k = \gcd(k, 2^k - 1)$ is the greatest common divisor of the set $\mS_k$ of binary $k$'th powers;
\item $F_k$ is the Frobenius number of the set $E_k^{-1} \mS_k$;
\item $d_k$ is the determinant of the Vandermonde matrix
$M_k = V(1,2,\ldots, 2^{k-1})$;
\item $\ell_k$ is the largest of the absolute values of the entries of $M_k^{-1}$
\end{itemize}

We will show that, for
$k \geq 1$, there exists a constant $W(k)$ such that 
every integer $N > F_k E_k$ that is a 
multiple of $E_k = \gcd(k, 2^k - 1)$ can be written as the sum of $W(k)$ binary $k$'th powers.

\begin{proof}
The result is clear for $k = 1$, so let us assume $k \geq 2$ and
that $N$ is a multiple of $E_k$.  
Define $Z = (F_k + 1) E_k$.  In the proof there are several
places where we need $N$ to be ``sufficiently large''; that is,
greater than some constant $C > Z$ depending only on $k$; some are awkward to write explicitly, so we do not attempt to do so.
Instead we just assume $N$ satisfies the requirement $N > C$.
The cases $F_k E_k < N \leq C$ are then handled by writing
$N$ as a sum of a constant number of elements of $\mS_k$.

Let $X := N - Z$.    Let $c$ be a constant specified below, and
let $n$ be the largest integer such that $2^n < c X^{1/k}$; we
assume $N$ is sufficiently large so that $n \geq 1$.

First we explain how to write 
$X = Y + D$, where 
\begin{enumerate}[(a)]
\item $Y < c_k (n)$; and
\item $D$ is an $\Enn$-linear combination of
$c_k (n), \ldots, c_k(n+k-1)$ with all coefficients
sufficiently large.
\end{enumerate}

To do so, define
$Q = c_k (n) + \cdots + c_k (n+k-1)$,
and $R = \lfloor X/Q \rfloor$.    We have now obtained $RQ$ (a good approximation of $X$), which is an $\Enn$-linear
combination of $c_k (n), \ldots, c_k(n+k-1)$ with every
coefficient equal to $R$.  Note that
$0 \leq X-RQ < Q$.  

We now improve this approximation of $X$ using
a greedy algorithm, as follows:  from $x-RQ$ we remove as many
copies as possible of $c_k(n + k-1)$, then as many copies as possible
of $c_k (n+k-2)$, and so forth, down to $c_k (n)$.  More precisely,
for each index
$i = k-1, k-2, \ldots, 0$ (in that order) set
$$r_i = \left\lfloor \frac{X - RQ - \sum_{i < j < k} r_j c_k(n+j)}{c_k (n+i)} \right\rfloor ,$$
and then put
$$D := RQ + r_0 c_k (n) + r_1 c_k (n+1) + \cdots + r_{k-1} c_k(n+k-1) .$$
By the way we chose the $r_i$, we have $0 \leq r_{k-1} < 2$ and
$0 \leq r_i <
c_k(n+i+1)/c_k(n+i) < 2^{k-1}$ for $0 \leq i \leq k-2$.
Furthermore, $0 \leq y < c_k (n)$.
Define $e_i = R + r_i$ for $0 \leq i < k$.
Then $D = \sum_{0 \leq i < k} e_i c_k (n+i)$.  

Since $Y < c_k (n)$, we can express $Y$ in base $2^n$ as $Y = a_0 + a_1 2^n + \cdots + a_{k-1} 2^{(k-1)n}$, where each $a_i$ is an integer satisfying $0 \leq a_i < 2^n$.

Apply the transformation discussed above in Section~\ref{vander},
obtaining the $\Que$-linear combination
$$ Y = \sum_{0 \leq i < k} b_i c_k (n+i).$$
It follows that $X = \sum_{0 \leq i < k} (e_i+b_i) c_k (n+i)$.

Furthermore, from Section~\ref{vander} we know that
each $b_i$ is at most $k\ell_k \cdot 2^n$ in absolute value,
and the denominator of each $b_i$ is at most $d_k$.

Now we want to ensure that, for $0 \leq i < k$, it holds
\begin{equation}\label{ebrightsize}
2^{n + i - 1} \leq e_i + b_i < c^\prime 2^n ,
\end{equation}
where $c^\prime > 0$ is a constant depending only on $k$.
We choose the constant $c$ mentioned above to get the
bound (\ref{ebrightsize}).

Pick $c > 0$ such that $c^{-k} = \left(2^{k-2} + k \ell_k + 1\right) 2^{k^2 - k + 1}$. Then we have
\begin{align*}
e_i + b_i &\geq R - k\ell_k \cdot 2^n > \frac{X}{Q} - k\ell_k \cdot 2^n - 1 > 
\frac{2^{kn} c^{-k}}{2 \cdot 2^{(k-1)(n+k) + 1}} - k\ell_k \cdot 2^n - 1 \\
&= 2^n \left(c^{-k} 2^{-k^2 + k - 1} - k\ell_k - 2^{-n}\right) > 2^{n + k - 2} \geq 2^{n + i - 1} ,
\end{align*}
as desired.

For the upper bound, recalling that our choice of $n$ implies that $c X^{1/k} \leq 2^{n+1}$, we have
$$ e_i + b_i < R + 2^{k-1} + k\ell_k 2^n \leq \frac{X}{Q} + 2^{k-1} + k\ell_k 2^n < \frac{2^{k(n+1)}c^{-k}}{2^{(k-1)(n + k - 1)}} + 2^{k-1} + k\ell_k 2^n,$$
and a routine calculation shows that $e_i + b_i \leq c' 2^n$,
where the $c^\prime$ depends only on $k$.  

The only problem left to resolve is that the
$e_i + b_i$ need not be integers.    Write
$X = X_1 + X_2$, where
$$X_1 := \sum_{0 \leq i < k} \lfloor e_i + b_i \rfloor c_k (n+i) .$$
Thanks to (\ref{ebrightsize}), we can use
Lemma~\ref{split} to rewrite $X_1$ as a sum of a constant
number of binary $k$'th powers.
Then
$$ X_2 = \sum_{0 \leq i < k} a_i c_k (n+i),$$
where $0 < a_i < 1$ is a rational number with denominator
$d_k$.  Writing $a_i = v_i / d_k$, we see
\begin{align*}
X_2 &= \sum_{0 \leq i < k} \frac{v_i}{d_k} 
\sum_{0 \leq j < k} 2^{(n+i)j}  \\
&= \sum_{0 \leq i, j < k} v_i \cdot \frac{2^{(n+i)j}}{d_k} \\
&= \sum_{0 \leq i, j < k} v_i  \left\lfloor 
\frac{2^{(n+i)j}}{d_k} \right\rfloor + X_3,
\end{align*}
where $0 \leq X_3 < d_k \cdot k^2$.

By Lemma~\ref{pows} we know that, provided 
$(n+i)j > kd_k + 2 \log_2 d_k$, each term
$\left\lfloor 
\frac{2^{(n+i)j}}{d_k} \right\rfloor$ is the sum
of a constant number of binary $k$'th powers, plus
an error term that is at most $2^{k d_k}$.   Thus,
provided $n$ (and hence $N$) are large enough,
this will be true for all exponents except those
corresponding to $j = 0$. Those exponents are not a problem, since for $j = 0$ we have $\left\lfloor 
\frac{2^{(n+i)j}}{d_k} \right\rfloor = 0$, because $d_k > 1$ for $k > 1$. It
follows that $X_4 := X_2-X_3$ is the sum of a constant number
of binary $k$'th powers.

Putting this all together, we have expressed 
$$ N = X_1 + X_4 + X_3 + Z,$$
where
$X_1$ and $X_4$ are both the sum of a constant
number of binary $k$'th powers, and 
$X_3 + Z$ is bounded below by $(F_k+1)E_k$ and above
by a constant.  
Now $N$, $X_1$, and $X_4$ are all multiples of
$E_k$, so the ``error term'' $X_3 + Z$ must
also be a multiple of $E_k$.  Furthermore,
the error term is larger than $E_k F_k$ and hence is
representable as a sum of a constant number of
binary $k$'th powers.
\end{proof}

\section{Final remarks}
\label{final}
Everything we have done in this paper is equally applicable to expansions in bases $b > 2$, with one
minor complication:  it may
be that if $b$ is not a prime, then the base-$b$
expansion of $1/d$ might not be purely periodic (after removing any leading zeros), but only ultimately periodic.  This adds a small complication in
Section~\ref{fractions}.  However, this case can easily be handled, and we
leave the details to the reader.

The bounds we obtained in this paper for $W(k)$ are
very weak --- at least doubly exponential --- and can certainly be improved.  We leave this as work for the future.   For example, we have

\begin{conjecture} Every natural number $> 147615$ is the sum of at most nine binary cubes.
The total number of exceptions is 4921.
\end{conjecture}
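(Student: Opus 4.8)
Since $E_3 = \gcd(2^3-1,3) = \gcd(7,3) = 1$, Theorem~\ref{main} already guarantees a finite $W(3)$ that works for all sufficiently large $N$; what the conjecture adds is entirely quantitative, namely that nine cubes suffice, that the last exception is $147615$, and that there are exactly $4921$ exceptions below it. The plan is therefore to convert Theorem~\ref{main} into an \emph{effective} statement for $k=3$---producing an explicit threshold $N_0$ beyond which every integer is a sum of at most nine binary cubes---and then to dispose of the range $[0,N_0]$ by a finite computation that locates and counts all exceptions.

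First I would specialize the proof of Theorem~\ref{main} to $k=3$ and carry every constant along explicitly. Here $M_3 = V(1,2,4)$ has $d_3 = (2-1)(4-1)(4-2) = 6$, the inverse $M_3^{-1}$ has the exact small entries coming from the polynomials $(x-2)(x-4)/3$, $(x-1)(x-4)/(-2)$, and $(x-1)(x-2)/6$ (so $\ell_3 = 8/3$), and the Frobenius number is $F_3 = 723$ as recorded after Lemma~\ref{frobl}. The fraction step is especially clean: since $6 = 2^1\cdot 3$, Lemma~\ref{jef5} applies with $f=3$, $e=1$, and $m = \ord_3(2) = 2$, so the binary expansion of $1/6$ is eventually periodic with period $2$. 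Substituting these exact values into the choice $2^n \approx X^{1/3}$, the splitting of Lemma~\ref{split}, and the approximation of each $2^{(n+i)j}/6$ via Lemma~\ref{pows} yields a concrete, if crude, $N_0$ above which $N$ is a sum of a bounded number of binary cubes.

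The difficulty is twofold, and overcoming it is the heart of the proof. First, the generic construction is wasteful: it represents $X$ using many more than nine cubes, so an additional, cubes-specific argument is needed to force the count down to exactly nine. Second, the threshold inherited from the doubly-exponential analysis of Theorem~\ref{main} is astronomically large, so a direct search over $[0,N_0]$ would be hopeless. I would attack both at once by replacing the crude bookkeeping with a sharpened analysis in the window $2^n \le X^{1/3}$: writing $X = e_0 c_3(n) + e_1 c_3(n+1) + e_2 c_3(n+2)$ plus an explicitly bounded correction, exploiting the period-$2$ expansion of $1/6$ to absorb the fractional part with few extra cubes, and then ruling out a tenth cube by a finite case analysis on the residues and fractional parts rather than by loose inequalities. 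The goal is a threshold $N_0$ small enough---ideally well below $10^{9}$---to be searched exhaustively.

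With such an $N_0$ in hand, the remaining part is purely computational. The binary cubes up to $N_0$ are sparse, their counting function being $\Theta(N^{1/3})$, so one can enumerate $\mS_3 \cap [0,N_0]$ and, by a dynamic-programming (subset-sum) sweep, flag every integer in $[0,N_0]$ expressible as a sum of at most nine of them. The unflagged integers are precisely the exceptions; the conjecture asserts that their maximum is $147615$ and their number is $4921$, and confirming these two values---together with verifying that every $N$ with $147615 < N \le N_0$ is representable---finishes the argument. I expect the genuine obstacle to be the effective-threshold step: compressing $N_0$ from doubly exponential down to a searchable size while simultaneously certifying that nine cubes, and not merely some larger constant, always suffice.
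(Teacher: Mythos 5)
This statement is a \emph{conjecture} in the paper, not a theorem: the authors offer no proof at all, only the remark that they have verified the claim computationally up to $2^{27}$. So there is no ``paper proof'' to match your approach against, and the real question is whether your proposal actually closes the gap the authors left open. It does not. Your plan has two load-bearing steps---(i) extracting from the proof of Theorem~\ref{main} an explicit threshold $N_0$ small enough to search exhaustively, and (ii) showing that above that threshold \emph{nine} cubes suffice, not merely some constant---and both are left as aspirations rather than arguments. The specialization you begin with is fine as far as it goes (indeed $d_3 = 6$, $\ell_3 = 8/3$, $F_3 = 723$, and $1/6$ has an eventually periodic binary expansion of period $2$), but mechanically instantiating the paper's construction at $k=3$ yields a bound on $W(3)$ far larger than nine and a threshold that, as the authors themselves note, is at least doubly exponential. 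Your proposed remedy---``a sharpened analysis,'' ``a finite case analysis on the residues and fractional parts rather than loose inequalities''---names the obstacle without providing any mechanism for overcoming it; you concede as much in your final sentence. A plan whose decisive steps are placeholders is not a proof, and no amount of dynamic programming over $[0,N_0]$ can rescue it until $N_0$ exists as an explicit, searchable number.

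It is worth noting that the paper itself points toward a different route to better bounds: its sumset conjecture (that sums from $C_n + C_{n+1} + \cdots + C_{n+k-1}$ are uniquely represented, proved by the authors for $k \leq 3$) would give $\mS_k$ positive density, whence Nathanson's theorem makes it an asymptotic additive basis. For $k=3$ this is actually established, so that avenue could in principle yield an effective bound on the number of cubes needed---but even this gives no control pinning the constant to nine or the last exception to $147615$. As matters stand, both in the paper and in your proposal, the statement remains a conjecture supported by finite computation.
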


\begin{remark}
We have verified this claim up to $2^{27}$.
\end{remark}

There is another approach to Waring's theorem for binary powers that could potentially give much better bounds for $W(k)$.  
For sets $S, T \subseteq \Enn$ define the {\it sumset} $S + T$ as follows:
$$ S + T = \lbrace s + t \ : \ s \in S, t \in T \rbrace.$$  We make the following conjecture:
\begin{conjecture}
Writing $C_n$ for the 
set $\{ a\cdot c_k^2 (n) \ : \ 2^{n-1} \leq a < 2^n \}$ of cardinality $2^{n-1}$ (i.e.,
the $kn$-bit binary $k$'th powers), for $n, k \geq 1$,
all the elements in the sumset
$$C_n + C_{n+1} + \cdots + C_{n+k-1},$$ 
are actually represented {\it uniquely\/} as a sum of $k$ elements, one
chosen from each of the summands.
\end{conjecture}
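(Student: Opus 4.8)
The plan is to recast the uniqueness assertion as a statement about a single linear relation. Two representations $\sum_{i=0}^{k-1} a_i c_k(n+i)$ and $\sum_{i=0}^{k-1} a_i' c_k(n+i)$, with $a_i, a_i' \in [2^{n+i-1}, 2^{n+i})$, coincide exactly when the differences $\delta_i := a_i - a_i'$ satisfy $\sum_{i=0}^{k-1}\delta_i c_k(n+i) = 0$, and here each $a_i$ ranges over an interval of $2^{n+i-1}$ integers, so $|\delta_i| \leq 2^{n+i-1}-1$. Thus it suffices to prove that the only integers $\delta_0,\ldots,\delta_{k-1}$ with $|\delta_i| < 2^{n+i-1}$ and $\sum_i \delta_i c_k(n+i) = 0$ are $\delta_i = 0$.

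First I would pass to the polynomial picture of Section~\ref{vander}. Writing $x = 2^n$ and expanding $c_k(n+i) = \sum_{\ell=0}^{k-1} 2^{(n+i)\ell} = \sum_\ell 2^{i\ell}\, x^\ell$, the relation becomes $\sum_{\ell=0}^{k-1} t_\ell x^\ell = 0$, where $t_\ell = \sum_i \delta_i 2^{i\ell}$; since $(t_\ell)_\ell = M_k (\delta_i)_i$ and $M_k$ is invertible, $t = 0$ if and only if $\delta = 0$. The goal is therefore to show that $\sum_\ell t_\ell 2^{n\ell} = 0$ forces $t = 0$. I would analyze this as a (nonstandard) base-$2^n$ expansion of zero, propagating carries from the least significant end: reducing repeatedly modulo $2^n$ produces integers $c_{-1} = 0, c_0, c_1, \ldots$ with $t_\ell + c_{\ell-1} = 2^n c_\ell$ and the closing condition $t_{k-1} + c_{k-2} = 0$. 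Using $|\delta_i| < 2^{n+i-1}$ one checks $|t_\ell| < 2^n\cdot 2^{(k-1)(\ell+1)}$, and a short induction then bounds every carry by $|c_\ell| < 2^{(k-1)(\ell+1)+1}$, \emph{independently of $n$}.

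This is the heart of the reduction: the carries range over a fixed finite box (of size roughly $2^{k^2}$, depending only on $k$), and from $t_\ell = 2^n c_\ell - c_{\ell-1}$ one obtains $\delta = 2^n M_k^{-1} c - M_k^{-1} c'$, where $c = (c_0,\ldots,c_{k-2},0)$ and $c'=(0,c_0,\ldots,c_{k-2})$ is its shift. Each fixed carry-tuple $c$ thus determines $\delta$ exactly once $n$ is fixed, and the box constraint $|\delta_i| < 2^{n+i-1}$ forces the entries of $\gamma := M_k^{-1} c$ to satisfy $|\gamma_i| \leq 2^{i-1}$ for all $i$ whenever $n$ is large. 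The plan is then to prove that the only bounded integer carry-tuple $c$ for which $M_k^{-1} c$ lies in this weighted box is $c = 0$ (whence $\delta = 0$), treating the finitely many small $n$ separately. As a sanity check this program is immediate for $k = 2$: the relation is $(\delta_0 + \delta_1) + 2^n(\delta_0 + 2\delta_1) = 0$, and $|\delta_0 + \delta_1| < 3\cdot 2^{n-1} < 2^{n+1}$ forces $\delta_0 + 2\delta_1 \in \{-1,0,1\}$; checking the three cases against $|\delta_1| < 2^n$ leaves only $\delta_0 = \delta_1 = 0$.

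The hard part will be the final step for general $k$. The estimates here are genuinely tight: a Minkowski-type count shows that the lattice $\{\delta \in \Zee^k : \sum_i \delta_i c_k(n+i) = 0\}$ has a nonzero vector whose $\ell^2$-norm lies only a constant factor outside the admissible box, so no soft volume or norm bound can close the argument. One must instead exploit the exact arithmetic encoded in the relations $t_\ell = 2^n c_\ell - c_{\ell-1}$, in particular the leading bits together with congruences such as $\delta_0 \equiv -c_{k-2} \pmod{2^{k-1}}$ coming from the top equation $\sum_i \delta_i 2^{i(k-1)} = -c_{k-2}$, and rule out every nonzero carry-tuple \emph{uniformly} in $k$. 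Controlling this finite-but-growing case analysis as $k \to \infty$ is precisely the obstruction that keeps the statement at the level of a conjecture.
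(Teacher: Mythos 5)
First, a point of order: the statement you were asked to prove is not proved in the paper either --- it appears there as a conjecture, which the authors report having verified only for $1 \leq k \leq 3$ --- so there is no paper proof to compare against. Your reduction is sound as far as it goes. Uniqueness in the sumset is indeed equivalent to the assertion that the only integer vector $\delta$ with $|\delta_i| < 2^{n+i-1}$ and $\sum_{0 \leq i < k} \delta_i \, c_k(n+i) = 0$ is $\delta = 0$; the passage to $t = M_k\delta$ (valid since the Vandermonde matrix $M_k = V(1,2,\ldots,2^{k-1})$ is symmetric), the carry recursion $t_\ell + c_{\ell-1} = 2^n c_\ell$ with closing condition $t_{k-1} + c_{k-2} = 0$, the $n$-independent bound on the carries, and your explicit $k=2$ verification are all correct. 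This genuinely reformulates the conjecture, for each fixed $k$ and all sufficiently large $n$, as a finite check on carry-tuples.

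But there is a genuine gap, and you name it yourself: the decisive step --- that the only bounded integer carry-tuple $c$ with $M_k^{-1}c$ lying in the weighted box $|\gamma_i| \leq 2^{i-1}$ is $c = 0$ --- is never carried out. After your change of variables this step \emph{is} the conjecture; describing the obstruction (``controlling this finite-but-growing case analysis as $k \to \infty$'') is an accurate diagnosis, not a proof. Two subsidiary points would also need tightening even for fixed $k$: the inference from $|\delta_i| < 2^{n+i-1}$ to $|\gamma_i| \leq 2^{i-1}$ only gives $|\gamma_i| \leq 2^{i-1} + O(2^{-n})$, and closing this to a clean inequality requires invoking that $\gamma_i$ is rational with denominator dividing $d_k$; and the ``finitely many small $n$'' you set aside form a set whose size depends on $k$, so they too would need uniform treatment in any proof of the full statement, which quantifies over all $n, k \geq 1$. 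In short, what you have is a correct and potentially useful reduction --- plausibly the same kind of finite verification by which the authors settled $k \leq 3$ --- but not a proof; the statement remains, as in the paper, a conjecture.
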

If this conjecture were true --- we have proved it for
$1 \leq k \leq 3$ --- it would
prove that $\mS_k$ has positive density, and hence, by a
result of Nathanson \cite[Theorem 11.7, p.~366]{N2}, that it is an asymptotic additive basis.
From this we could obtain better bounds on $W(k)$.
\medskip

In the light of our results, it seems natural to ask about the set $\mT_1^b$ of positive integers $k$ such that $\gcd(\mS_k^b) = 1$.
Indeed, we have that the elements of $\mT_1^b$ are exactly the integers $k$ such that $\mS_k^b$ forms an asymptotic additive basis for $\mathbb{N}$.
It turn out that $\mT_1^b$ has a natural density, and even more can be said: since $\left(\frac{b^k - 1}{b - 1}\right)_{k \geq 1}$ is a Lucas sequence, we can employ the same methods of \cite{ST} to prove the following result:

\begin{theorem}
For all integers $g \geq 1$, $b \geq 2$, the set $\mT_g^b$ of positive integers $k$ such that $\gcd(\mS_k^b) = g$ has a natural density, given by
\begin{equation*}
\mathbf{d}(\mT_g^b) = \sum_{d \geq 1 \text{ coprime with } b} \frac{\mu(d)}{L_b(dg)},
\end{equation*}
where $\mu$ is the M\"obius function and $L_b(x) := \lcm(x, \ord_x(b))$, where $\ord_x(b)$ is the multiplicative order of $b$, modulo $x$.
In particular, the series converges absolutely.

Furthermore, $\mathbf{d}(\mT_g^b) > 0$ if and only if $\mT_g^b \neq \varnothing$ if and only if $g = \gcd\!\left(L_b(g), \frac{b^{L_b(g)} - 1}{b - 1}\right)$.
\end{theorem}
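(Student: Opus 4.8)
The plan is to reduce the statement to a density computation over the nested sets $A_h := \{k \geq 1 : h \mid E_k\}$ and then apply M\"obius inversion, following the Lucas-sequence method of \cite{ST}. By Theorem~\ref{sanna}, $\mT_g^b = \{k \geq 1 : E_k = g\}$ with $E_k = \gcd(c_k, k)$, where I abbreviate $c_k := c_k^b(1) = \frac{b^k-1}{b-1}$. Two preliminary facts organize everything. First, \emph{monotonicity}: if $m \mid k$ then $b^m - 1 \mid b^k - 1$, so $c_m \mid c_k$, and hence $E_m \mid c_m \mid c_k$ together with $E_m \mid m \mid k$ give $E_m \mid E_k$. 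Second, $E_k$ is always coprime to $b$: if a prime $p$ divides $b$ then $c_k \equiv 1 \pmod p$, so $p \nmid c_k$. The latter shows $A_h = \varnothing$ (hence $\mathbf{d}(A_h) = 0$) whenever $\gcd(h,b) > 1$, and also that $\mT_g^b = \varnothing$ when $\gcd(g,b) > 1$; this is exactly why only $d$ coprime to $b$ will survive in the final sum. Since $\mathbf{1}[E_k = g] = \sum_{d \geq 1}\mu(d)\,\mathbf{1}[dg \mid E_k]$, the theorem reduces to computing $\mathbf{d}(A_h)$ and justifying an interchange of limit and sum.

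The core lemma is that, for $\gcd(h,b) = 1$, $A_h$ is exactly the set of multiples of $L_b(h)$, so $\mathbf{d}(A_h) = 1/L_b(h)$. To prove it, set $\nu := \ord_h(b)$. From $c_k(b-1) = b^k - 1$ one gets $h \mid c_k \Rightarrow b^k \equiv 1 \pmod h \Rightarrow \nu \mid k$. Writing $k = \nu s$ and grouping the geometric sum $c_k$ into $s$ blocks of length $\nu$, the congruence $b^\nu \equiv 1 \pmod h$ yields the block identity $c_k \equiv s\,c_\nu \pmod h$; hence $h \mid c_k$ is equivalent to $\frac{h}{\gcd(h,c_\nu)} \mid s$. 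Combining with $h \mid k$, the set $A_h$ consists of the multiples of $\lcm\!\big(h,\ \frac{h}{\gcd(h,c_\nu)}\,\nu\big)$, and it remains to check this equals $\lcm(h,\nu) = L_b(h)$. I would verify this prime by prime: for $p \mid h$ with $p \nmid b-1$ one has $p^{\nu_p(h)} \mid c_\nu$, while for $p \mid b-1$ the lifting-the-exponent lemma (Lemma~\ref{lem:LTE}) gives $\nu_p(c_\nu) = \nu_p(\nu)$; in either case the exponents of $p$ on the two sides coincide. This valuation bookkeeping — including the separate $p=2$ subcase of LTE — is the most computational step, but it runs parallel to the argument already used for Theorem~\ref{sanna}.

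With $\mathbf{d}(A_h) = 1/L_b(h)$ in hand, the formula $\mathbf{d}(\mT_g^b) = \sum_{\gcd(d,b)=1}\mu(d)/L_b(dg)$ follows \emph{once} the interchange is justified, and this is the main obstacle. Counting gives $\#\{k \le x : E_k = g\} = \sum_d \mu(d)\lfloor x/L_b(dg)\rfloor$, and dividing by $x$ and letting $x \to \infty$ is legitimate by dominated convergence for series — using $|\mu(d)\lfloor x/L_b(dg)\rfloor/x| \le 1/L_b(dg)$ — precisely when $\sum_d 1/L_b(dg) < \infty$. This absolute convergence is \emph{not} a formality: the trivial bound $L_b(dg) \ge dg$ only produces a divergent harmonic tail, so one genuinely needs that $\ord_n(b)$ is large outside a sparse set of $n$, making $L_b(n) = \lcm(n,\ord_n(b))$ grow faster than $n^{1+\delta}$ on average. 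This is exactly the input the methods of \cite{ST} supply for Lucas sequences, and I would import it.

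For the final equivalences I would prove the cycle $\mathbf{d}(\mT_g^b) > 0 \Rightarrow \mT_g^b \ne \varnothing \Rightarrow g = \gcd\!\big(L_b(g), \tfrac{b^{L_b(g)}-1}{b-1}\big) \Rightarrow \mathbf{d}(\mT_g^b) > 0$. The first implication is trivial. The second is the clean payoff of monotonicity: if $E_k = g$ for some $k$, then $k \in A_g$ forces $L_b(g) \mid k$, so $E_{L_b(g)} \mid E_k = g$ by monotonicity, while $g \mid E_{L_b(g)}$ because $L_b(g) \in A_g$; hence $E_{L_b(g)} = g$, which is the asserted gcd condition. For the last implication, once $L_b(g) \in \mT_g^b$ I would use the inclusion–exclusion lower bound $\mathbf{d}(\mT_g^b) \ge \mathbf{d}(A_g) - \sum_{q \text{ prime}} \mathbf{d}(A_{qg})$ and show the subtracted sum is $< 1/L_b(g)$; this again rests on the same order estimates controlling $\sum 1/L_b(qg)$, so strict positivity introduces no difficulty beyond the convergence already established.
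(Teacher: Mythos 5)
The paper itself offers no proof of this theorem: it simply states that ``the same methods of \cite{ST}'' apply, so your reconstruction has to be judged against that method. Most of it is a faithful and correct rendering of it: the reduction to $E_k = \gcd(c_k^b(1),k)$ via Theorem~\ref{sanna}, the monotonicity $m \mid k \Rightarrow E_m \mid E_k$, the core lemma that for $\gcd(h,b)=1$ the set $A_h$ is exactly the set of multiples of $L_b(h)$, the M\"obius decomposition $\mathbf{1}[E_k = g] = \sum_d \mu(d)\,\mathbf{1}[dg \mid E_k]$, and the derivation of the nonemptiness criterion $g = E_{L_b(g)}$ are all right. Two remarks on details: your LTE claim $\nu_p(c_\nu) = \nu_p(\nu)$ for $p \mid b-1$ is false at $p=2$ when $\nu$ is even (there $\nu_2(c_\nu) = \nu_2(b+1)+\nu_2(\nu)-1$), but the inequality $\nu_2(c_\nu) \geq \nu_2(\nu)$ is all the lcm computation needs, and you flagged this subcase; and importing the convergence of $\sum_d 1/L_b(dg)$ from the order-statistics estimates of \cite{ST} is consistent with what the paper itself does.

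The genuine gap is your last step, positivity. The inequality you propose to prove, $\sum_{q \text{ prime}} \mathbf{d}(A_{qg}) < \mathbf{d}(A_g) = 1/L_b(g)$, is simply false in general, and not because of tail convergence issues: take $b = 31$ and $g = 1$. Then $2$, $3$, $5$ all divide $b-1$, so $\ord_q(b) = 1$ and $L_b(q) = q$ for $q \in \{2,3,5\}$, whence $\sum_q \mathbf{d}(A_{qg}) \geq \tfrac12 + \tfrac13 + \tfrac15 = \tfrac{31}{30} > 1 = \mathbf{d}(A_1)$; yet $1 \in \mT_1^{31}$, so the theorem asserts $\mathbf{d}(\mT_1^{31}) > 0$. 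So your union bound has a negative right-hand side precisely in cases the theorem still covers, and ``strict positivity introduces no difficulty beyond the convergence already established'' is exactly where the difficulty lies. The repair (which is what the methods of \cite{ST} and \cite{LS} actually implement) is to use nonemptiness \emph{before} sieving: with $L := L_b(g)$, the hypothesis $E_L = g$ forces $R_q := L_b(qg)/L$ to be an integer $\geq 2$ for every prime $q$ (if $R_q = 1$ then $L \in A_{qg}$, i.e., $qg \mid E_L = g$, absurd), and $\mT_g^b = \{Lm : R_q \nmid m \text{ for all primes } q\}$. One then splits the primes into two scales: for the finitely many $q$ with $R_q$ below a threshold, restrict $m$ to be coprime to the product of those $R_q$ --- a condition of positive density by an Euler-product/CRT count --- and only for the remaining primes apply the union bound, whose tail is made smaller than that positive density using the convergence of $\sum_q 1/L_b(qg)$. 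The global union bound alone cannot close the argument.
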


Also, employing the methods of \cite{LS}, the counting function of the set $\{g \geq 1 : \mT_g^b \neq \varnothing\}$ can be shown to be $\gg x / \log x$ and at most $o(x)$, as $x \to +\infty$.
Note only that, in doing so, where in \cite{LS} results of Cubre and Rouse~\cite{CubreRouse} on the density of the set of primes $p$ such that the rank of appearance of $p$ in the Fibonacci sequence is divisible by a fixed positive integer $m$ are used, one should instead use results on the density of the set of primes $p$ such that $\ord_p(b)$ is divisible by $m$ --- for example, those given by Wiertelak~\cite{Wie}.

\section*{Acknowledgment}

We are grateful to Igor Pak for introducing the first and third authors to each other.
 
\bibliographystyle{amsplain}

\end{document}